\newcommand{\M}{\mathcal M}
\newcommand{\w}{\omega}
\newcommand{\IR}{\mathbb R}
\newcommand{\I}{\mathcal I}
\newcommand{\J}{\mathcal J}
\newcommand{\A}{\mathcal A}
\newcommand{\B}{\mathcal B}
\newcommand{\X}{\mathcal X}
\newcommand{\F}{\mathcal F}
\newcommand{\K}{\mathcal K}
\newcommand{\C}{\mathcal C}
\newcommand{\U}{\mathcal U}
\newcommand{\V}{\mathcal V}
\newcommand{\HH}{\mathcal H}
\newcommand{\II}{\mathbb I}
\newcommand{\IN}{\mathbb N}
\newcommand{\IQ}{\mathbb Q}
\newcommand{\add}{\mathrm{add}}
\newcommand{\cov}{\mathrm{cov}}
\newcommand{\non}{\mathrm{non}}
\newcommand{\cof}{\mathrm{cof}}
\newcommand{\e}{\varepsilon}
\newcommand{\Ra}{\Rightarrow}
\newcommand{\oD}{\overline{\mathcal D}}
\newcommand{\Z}{\mathcal Z}
\newtheorem{theorem}{Theorem}[section]
\newtheorem{lemma}[theorem]{Lemma}
\newtheorem{corollary}[theorem]{Corollary}
\newtheorem{problem}[theorem]{Problem}
\newtheorem{example}[theorem]{Example}
\newtheorem{proposition}[theorem]{Proposition}
\theoremstyle{definition}
\title{Topologically invariant $\sigma$-ideals on Euclidean spaces}
\author{Taras Banakh, Micha\l\ Morayne, Robert Ra\l owski, Szymon \.Zeberski}
\address[Taras Banakh]{Department of Mathematics, Ivan Franko National University of Lviv, Ukraine, and
Institute of Mathematics, Jan Kochanowski University, Kielce, Poland}
\email{t.o.banakh@gmail.com}
\address[Micha\l\ Morayne, Robert Ra\l owski, Szymon \. Zeberski]{Institute of Mathematics and Computer Science, Wroc\l aw University of Technology, Wroc\l aw, Poland}
\email{michal.morayne@pwr.wroc.pl, robert.ralowski@pwr.wroc.pl, szymon.zeberski@pwr.wroc.pl}
\begin{document}
\begin{abstract} We study and classify topologically invariant $\sigma$-ideals with an analytic base on Euclidean spaces and evaluate the cardinal characteristics of such ideals.
\end{abstract}
\maketitle
\footnotetext{\noindent AMS classification: 03E17,03E75,57N15\\
Keywords: topologically invariant ideal, cardinal characteristics, meager set, Cantor set; \\
The work has been partially financed by NCN means granted by decision DEC-2011/01/B/ST1/01439. }

\section{Introduction}
The $\sigma$-ideals of Lebesgue's measure zero sets and meager sets have been the subject of extensive research devoted to revealing the fine structure of the real line and, more generally, the Euclidean spaces. This research resulted in finding the relations between the most important cardinal characteristics of these two $\sigma$-ideals. These relations are described by the Cicho\'n diagram (see e.g. \cite{fremlin}, \cite{bartoszynskijudah}). Both ideals have Borel base and differ by the property that the ideal $\M$ of meager sets in topologically invariant while the ideal $\mathcal N$ of Lebesgue null sets is not.

In this paper we examine the properties  of non-trivial
topologically invariant $\sigma$-ideals with Borel base on Euclidean spaces $\IR^n$. In particular, we show that the $\sigma$-ideal of meager sets, $\mathcal{M}$, is the biggest topologically invariant $\sigma$-ideal with Borel base on $\IR^n$ while the $\sigma$-ideal generated by the so called tame Cantor sets, $\sigma\C_0$, is the smallest one. Our main results concern the four cardinal characteristics of these two $\sigma$-ideals: the additivity ($\add$), the uniformity ($\non$), the covering ($\cov$) and the cofinality ($\cof$).
In fact, we show that the uniformity and the covering numbers are the same for all non-trivial topologically invariant $\sigma$-ideals with Borel base on Euclidean spaces and the remaining two cardinals may be different than the corresponding characteristics of the ideal $\M$. Yet,  the respective cardinal characteristics of the extremal ideals $\sigma\C_0$ and $\mathcal{M}$ coincide.
The same concerns the $\sigma$-ideals $\sigma\oD_k$ generated by closed subset of dimension $k<n$ in $\IR^n$.

Properties of topologically invariant $\sigma$-ideals may be different in different topological spaces. There are other natural spaces where it would be interesting and useful to know these properties. The Hilbert cube case is examined in \cite{hilbert}.

\section{Definitions, notation and statement of principal results}

The symbols $\IR$, $\IQ$, and $\omega$ will have the usual meaning, i.e. they will denote the real line, the set of rational numbers, and the set of finite ordinals (i.e., the set of nonnegative integers), respectively. A {\em Euclidean space} is a topological space homeomorphic to the space $\IR^n$ for some positive integer $n$. All topological spaces considered in this paper are assumed to be separable and metrizable.

Let us recall that a subset $A$ of a topological space $X$ is {\em analytic} if $A$ is the  image of a Polish space under a continuous map. A subset $A\subseteq X$ has the {\em Baire property} (briefly, $A$ is a {\em BP-set}) if there is an open subset $U\subseteq X$ such that the symmetric difference $A\triangle U=(A\setminus U)\cup(U\setminus A)$ is meager in $X$.

A non-empty family $\I$ of subsets of a set $X$ is called an {\it ideal on $X$} if $\I$ is {\em hereditary} (with respect to taking subsets) and {\em additive} in the sense that the union $A\cup B$ of any two sets $A,B\in\I$ belongs to $\I$. An ideal $\I$ is called a {\em $\sigma$-ideal} if the union $\bigcup\A$ of any countable family $\A\subseteq\I$ belongs to $\I$.
An ideal $\I$ on a set $X$ is {\em non-trivial} if $\I$ contains an uncountable set and $\I$ is not equal to the ideal $\mathcal{P}(X)$ of all subsets of $X$.

A subfamily $\B\subseteq\I$ is a {\em base} of an ideal $\I$ if each element $A\in\I$ is a subset of some set $B\in\B.$
We say that an ideal $\I$ of subsets of a topological space $X$ has {\it  Borel} (resp. {\it analytic}, {\em BP-}) {\it base}, or that $\I$ is an {\it ideal with Borel} (resp. {\it analytic}, {\em BP-}) {\it base}, if there exists a base for $\I$ consisting of Borel (analytic, BP-) subsets of $X$.

It is well-known that each Borel subset of a Polish space is analytic and each analytic subset of a metrizable separable space $X$ has the Baire property in $X$. This implies that for an ideal $\I$ on a  Polish space we have the following implications:
\smallskip

\centerline{$\I$ has Borel base $\Ra$ $\I$ has analytic base $\Ra$ $\I$ has BP-base.}
\medskip

A $\sigma$-ideal $\I$ on a topological space $X$ is {\it topologically invariant} if $\I$ is transformed onto $\I$ by any homeomorphism $h$ of $X$ i.e. $\I=\{h(A):A\in\I\}$.

It is clear that for each topological space $X$ the ideal $\M$ of meager subsets of $X$ is topologically invariant. It turns out that this ideal is the largest one among non-trivial topologically invariant $\sigma$-ideals with BP-base on a Euclidean space $X=\IR^n$.

\begin{theorem}\label{largest} Each non-trivial $\sigma$-ideal $\I$ with BP-base on a Euclidean space $X=\IR^n$ is contained in the ideal $\M$ of meager subsets of $X$.
\end{theorem}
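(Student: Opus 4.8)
The plan is to argue by contradiction. Assume that some set in $\I$ is non-meager; I will deduce $\I=\mathcal P(X)$, contradicting the non-triviality of $\I$. So fix $A\in\I$ that is non-meager. Since $\I$ has a BP-base, $A$ is contained in some BP-set $B\in\I$, and $B\supseteq A$ is then non-meager as well. By the Baire property there is a non-empty open set $U\subseteq X$ with $B\triangle U$ meager (it is non-empty, since otherwise $B=B\triangle\emptyset$ would be meager). Pick an open ball $W\subseteq U$; then $W\setminus B\subseteq U\setminus B$ is meager, so we may fix a meager $F_\sigma$ set $M$ with $W\setminus B\subseteq M\subseteq W$. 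The set $G:=W\setminus M$ is then a dense $G_\delta$ in $W$, it is comeager in $W$, and $G\subseteq W\cap B\subseteq B$, so $G\in\I$. Since $\I$ is a topologically invariant $\sigma$-ideal, it now suffices to write $X=\IR^n$ as a union of countably many homeomorphic copies of $G$: that would give $X\in\I$ and hence $\I=\mathcal P(X)$.

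The easy half of the covering uses affine maps. Write $\IR^n$ as a union of countably many open balls $\{B_k\}_{k\in\w}$ and choose, for each $k$, an affine homeomorphism $a_k$ of $\IR^n$ with $a_k(W)=B_k$. Then $\bigcup_k a_k(G)\in\I$, and its complement is contained in $M^*:=\bigcup_k a_k(M)$, because a point of $B_k=a_k(W)$ lying outside $a_k(G)$ must lie in $a_k(W\setminus G)=a_k(M)$. The set $M^*$ is meager $F_\sigma$, so we may write $M^*=\bigcup_{j\in\w}N_j$ with each $N_j$ compact and nowhere dense (possible since $M^*$ is $F_\sigma$ and every closed nowhere dense set in $\IR^n$ is a countable union of compact nowhere dense sets). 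Hence everything reduces to the following claim: for every compact nowhere dense $N\subseteq\IR^n$ there is a self-homeomorphism $g$ of $\IR^n$ with $N\subseteq g(G)$. Granting this with witnesses $g_j$ for $N_j$, we get $X\subseteq\bigcup_k a_k(G)\cup M^*\subseteq\bigcup_k a_k(G)\cup\bigcup_j g_j(G)\in\I$, so $X\in\I$ and $\I=\mathcal P(X)$, contradicting the non-triviality of $\I$.

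To produce such a $g$, observe that $N\subseteq g(G)$ is equivalent to $g^{-1}(N)\subseteq G=W\setminus M$. Since $N$ is compact, an affine map $a$ places it inside the ball $\tfrac12 W$ concentric with $W$ and of half its radius; so it is enough to find a homeomorphism $\psi$ of $\IR^n$ that is the identity off $W$ and satisfies $\psi(N)\cap M=\emptyset$ for the relocated $N\subseteq\tfrac12 W$ (then automatically $\psi(N)\subseteq\psi(W)=W$, so $\psi(N)\subseteq W\setminus M=G$, and $g=(\psi\circ a)^{-1}$ works). I would obtain $\psi$ by a Baire category argument in the Polish group $\mathcal H$ of self-homeomorphisms of the closed ball $\overline W$ fixing $\partial W$ pointwise. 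Writing $M=\bigcup_j F_j$ with $F_j$ closed nowhere dense in $W$, each set $\{h\in\mathcal H:h(N)\cap F_j=\emptyset\}$ is open in $\mathcal H$ (the image $h(N)$ is a compact subset of the open ball $W$, bounded away from $\partial W$, and sufficiently small perturbations of $h$ keep it off the closed set $F_j$), and — this is the key point — it is dense, because a small ambient homeomorphism supported near $N$ can nudge the compact nowhere dense set $h(N)$ off the nowhere dense set $F_j$; by the Baire category theorem the intersection over $j$ is non-empty, and any $\psi$ in it does the job.

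The one genuinely technical step is the density assertion just invoked: moving a prescribed compact nowhere dense subset of $\IR^n$ off a prescribed nowhere dense set by an arbitrarily small ambient homeomorphism. This is a general-position fact, and I expect it to be the main obstacle, the subtlety being the interference between local nudges when one works chart by chart over a finite cover of $N$. I would handle it either by ordering the nudges along a finite cover of controlled multiplicity — so that treating one $F_j$ disturbs the disjointness gained from the previously treated sets by less than the margin already secured — or by realizing $\psi$ directly as a convergent infinite composition of tiny homeomorphisms, the $j$-th one pushing $h(N)$ off $F_j$ while moving points by far less than the current distance from $F_1\cup\dots\cup F_{j-1}$, so that all the disjointness conditions persist in the limit.
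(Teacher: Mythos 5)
Your first paragraph is fine and matches the paper's opening move: from a non-meager BP-set in $\I$ you correctly extract a set $G\in\I$ that is a dense $G_\delta$ in some ball $W$. The gap is in everything that follows. The key claim --- that for \emph{every} compact nowhere dense $N\subseteq\IR^n$ there is a homeomorphism $g$ of $\IR^n$ with $N\subseteq g(G)$ --- is simply false, for dimension reasons. The set $G=W\setminus M$ can be zero-dimensional: take $B\cap W=W\cap(\IR\setminus\IQ)^n$, so that $M$ is forced to contain every point of $W$ having a rational coordinate. Then $M^*$ contains pieces of hyperplanes, and any decomposition $M^*=\bigcup_j N_j$ into compact sets must (by Baire category inside one such hyperplane piece) produce some $N_j$ of dimension $n-1$. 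Since $g^{-1}(N_j)$ is homeomorphic to $N_j$ and topological dimension is a topological invariant, $g^{-1}(N_j)\subseteq G$ is impossible for $n\ge 2$. Consequently the Baire-category argument you sketch cannot work, and indeed its ``key point'' --- the density of $\{h:h(N)\cap F=\emptyset\}$ --- already fails for a single closed nowhere dense $F$: in $\IR^2$ take $N=[0,1]\times\{0\}$ and $F=\{1/2\}\times[-1,1]$; every homeomorphism moving points by less than $1/4$ sends $N$ to an arc joining a point near $(0,0)$ to a point near $(1,0)$ while staying within $1/4$ of $N$, and every such arc must cross $F$. So that set omits an entire neighbourhood of the identity. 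This is not a general-position fact: general position separates an $m$-dimensional set from a $k$-dimensional one only when $m+k<n$, whereas here both sets may be $(n-1)$-dimensional.

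What you are implicitly trying to reprove is a genuine theorem. The paper first upgrades $G$ to a dense $G_\delta$ subset of all of $X$ (using countably many homeomorphisms, by homogeneity, and passing to a $G_\delta$ subset of the resulting comeager set), and then invokes the result of Ciesielski and Wojciechowski \cite{CW} that $\IR^n$ is the union of $n+1$ homeomorphic copies $h_0(G),\dots,h_n(G)$ of any dense $G_\delta$ subset $G$. That theorem is the essential non-elementary ingredient; your argument needs it (or something equivalent) and cannot be repaired by small ambient nudges.
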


\begin{proof} Let us assume that $\I\not\subseteq \M$. Let $A\in\I\setminus\M$. Since $\I$ has BP-base, we can assume that the non-meager set $A$ has the Baire property and hence contains a $G_\delta$-subset $G_U\subseteq A$, dense in some open subset $U$ of $X$. Since the Euclidean space $X$ is topologically homogeneous, we can choose a countable family $H$ of homeomorphisms of $X$ such that $\bigcup_{h\in H}h(U)$ is dense in $X$. Then the $G_{\delta\sigma}$-set $D=\bigcup_{h\in H}h(G_U)$ is comeager in $X$ and hence contains a subset $G\subseteq D$, which is dense $G_\delta$ in $X$. By the topological invariance of $\I$, the set $D$ and its $G_\delta$-subset $G$ belong to the $\sigma$-ideal $\I$.

By \cite{CW}, for the dense $G_\delta$ subset $G$ of the Euclidean space $X=\IR^n$, there are homeomorphisms $h_0,\dots,h_n:X\to X$  such that $X=\bigcup_{k=0}^n h_k(G)$. Then $X$ belongs to $\I$ by the topological invariance of $\I$, which means that the ideal $\I$ is trivial.
\end{proof}

By Theorem~\ref{largest}, $\M$ is the largest non-trivial $\sigma$-ideal with Borel base on $\IR^n$. Now we describe the smallest non-trivial $\sigma$-ideal with Borel base on $\IR^n$. It is denoted by $\sigma\C_0$ and is generated by tame Cantor sets in $\IR^n$.

A subset $C$ of a Polish space $X$ is called a {\em Cantor set\/} if $C$ is homeomorphic to the Cantor cube $\{0,1\}^\w$. By Brouwer's characterization~\cite[7.4]{Ke}, a subset $C\subseteq X$ is a Cantor set if and only if it is compact, zero-dimensional and has no isolated points.

Two subsets $A,B$ of a topological space $X$ are called {\em ambiently homeomorphic} if $h(A)=B$ for some homeomorphism $h:X\to X$ of $X$.

A subset $C$ of a Euclidean space $X=\IR^n$ is called a {\em tame Cantor set} if
it is ambiently homeomorphic to a Cantor set contained in the line $\IR\times\{0\}^{n-1}\subseteq\IR^n$.
Since any two Cantor sets on the real line are ambiently homeomorphic, any two tame Cantor sets in $\IR^n$ are ambiently homeomorphic.

By \cite{Osborn}, a closed subset $C\subseteq \IR^n$ is a tame Cantor set if and only if for each $\e>0$ the set $C$ is contained in the union $\bigcup\F$ of a finite family $\F$ of pairwise disjoint open sets homeomorphic to $\IR^n$ and having diameter $<\e$. This characterization implies that {\em each Cantor set in $\IR^n$ contains a tame Cantor set}.

It is known \cite{Bing} that for $n\le 2$ each Cantor set in $\IR^n$ is tame while for $n\ge 3$ a Cantor subset $C\subseteq \IR^n$ is tame if and only if $C$ is a {\em $Z_2$-set}  in $\IR^n$. The latter means that each map $f:[0,1]^2\to\IR^n$ can be uniformly approximated by a map $f':[0,1]^2\to\IR^n\setminus C$. Cantor sets which are not tame are called {\em wild}, see \cite{An}, \cite{Blank}, \cite{Sher}.

For a Euclidean space $X=\IR^n$ by $\sigma\C_0$ we denote the $\sigma$-ideal generated by  tame Cantor sets in $X$. It consists of all subsets of countable unions of tame Cantor sets in $X$.

\begin{theorem}\label{smallest} The $\sigma$-ideal $\sigma\C_0$ is contained in each non-trivial $\sigma$-ideal $\I$ with analytic base on $X=\IR^n$.
\end{theorem}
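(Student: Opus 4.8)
The plan is to reduce the inclusion $\sigma\C_0\subseteq\I$ to the single assertion that \emph{some} tame Cantor set in $X=\IR^n$ belongs to $\I$. Suppose for a moment that a tame Cantor set $C_0$ lies in $\I$. As noted above, any two tame Cantor sets in $\IR^n$ are ambiently homeomorphic, so the topological invariance of $\I$ gives $C\in\I$ for every tame Cantor set $C$ (write $C=h(C_0)$ for a suitable homeomorphism $h$ of $X$). Since $\I$ is a $\sigma$-ideal, every countable union of tame Cantor sets belongs to $\I$, and since $\I$ is hereditary, every subset of such a union --- that is, every member of $\sigma\C_0$ --- belongs to $\I$. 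Thus it suffices to exhibit one tame Cantor set inside $\I$.

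To produce such a set I would use the hypotheses that $\I$ is non-trivial and has an analytic base. Non-triviality provides an uncountable set $A\in\I$, and the analytic base provides an analytic set $B\in\I$ with $A\subseteq B$; in particular $B$ is uncountable. Since $X=\IR^n$ is Polish, the classical perfect set property of analytic sets yields a subset $C\subseteq B$ homeomorphic to the Cantor cube $\{0,1\}^\w$, i.e. a Cantor set $C\subseteq B$. By the consequence of Osborn's characterization recorded in the excerpt --- each Cantor set in $\IR^n$ contains a tame one --- there is a tame Cantor set $C_0\subseteq C\subseteq B$. As $B\in\I$ and $\I$ is hereditary, $C_0\in\I$, which completes the argument.

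The only genuinely non-elementary ingredient here is the step from an uncountable analytic set to a Cantor set inside it, which rests on the perfect set theorem for analytic (Suslin) sets; everything else has already been recorded in the excerpt, namely the ambient homogeneity of tame Cantor sets, the fact that every Cantor set in $\IR^n$ contains a tame Cantor set, and the closure properties of a topologically invariant $\sigma$-ideal. A small point to verify along the way is that the copy of $\{0,1\}^\w$ delivered by the perfect set theorem is a Cantor set in the sense used in the paper --- compact, zero-dimensional, and without isolated points --- but this is automatic, being a topological property of $\{0,1\}^\w$. I therefore expect no real obstacle beyond invoking this standard descriptive-set-theoretic fact.
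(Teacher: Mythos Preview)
Your argument is correct and follows essentially the same route as the paper's proof: pass from an uncountable member of $\I$ to an analytic one via the analytic base, invoke the perfect set theorem (Souslin's theorem) to extract a Cantor set, shrink it to a tame Cantor set, and then use ambient homeomorphism of tame Cantor sets together with the topological invariance and $\sigma$-closure of $\I$ to conclude $\sigma\C_0\subseteq\I$. The only cosmetic difference is that you keep the analytic superset $B$ separate from $A$, whereas the paper simply replaces $A$ by its analytic cover; the logical content is identical.
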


\begin{proof} The ideal $\I$, being non-trivial, contains an uncountable set $A$. Since $\I$ has analytic base, we can assume that the uncountable set $A$ is analytic and hence contains a Cantor set $C$ according to Souslin's Theorem \cite[29.1]{Ke}. Since each Cantor set in $\IR^n$ contains a tame Cantor set, we can assume that $C$ is a tame Cantor set in $X$. So, the ideal $\I$ contains a tame Cantor set. Since any two tame Cantor sets in $X=\IR^n$ are ambiently homeomorphic, by the topological invariance, the ideal $\I$ contains all tame Cantor sets and being a $\sigma$-ideal, contains the $\sigma$-ideal $\sigma\C_0$ generated by tame Cantor sets in $X$.
\end{proof}

\begin{corollary}\label{minimax} If $\mathcal I$ is a non-trivial topologically invariant $\sigma$-ideal $\I$ with analytic base on a Euclidean space $X=\IR^n$, then $\sigma\C_0\subseteq\I\subseteq \M$.
\end{corollary}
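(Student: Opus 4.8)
The plan is to obtain this statement simply by combining Theorems~\ref{largest} and~\ref{smallest}, since between them they pin $\I$ down from above and from below; the only point to verify is that $\I$ satisfies the hypotheses of each of these two theorems.

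First I would note that an ideal with analytic base is in particular an ideal with BP-base: this is exactly the second implication recorded in Section~2, stemming from the fact that every analytic subset of a separable metrizable space has the Baire property. Hence the non-trivial, topologically invariant $\sigma$-ideal $\I$ with analytic base on $X=\IR^n$ is, in particular, a non-trivial topologically invariant $\sigma$-ideal with BP-base, so Theorem~\ref{largest} applies and yields $\I\subseteq\M$.

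Next, $\I$ is by hypothesis a non-trivial $\sigma$-ideal with analytic base on $X=\IR^n$, so Theorem~\ref{smallest} applies verbatim and gives $\sigma\C_0\subseteq\I$; here the topological invariance of $\I$ is exactly what lets one pass from the presence of a single tame Cantor set in $\I$ to the presence of all of them, and then to the $\sigma$-ideal they generate. Chaining the two inclusions produces $\sigma\C_0\subseteq\I\subseteq\M$, which is the assertion.

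I do not expect any real obstacle: the corollary is a formal consequence of the two preceding theorems, and whatever difficulty there is lies entirely in their proofs, namely in the covering of a Euclidean space by finitely many homeomorphic copies of a dense $G_\delta$-set taken from \cite{CW}, and in Osborn's characterization of tame Cantor sets together with Souslin's theorem on analytic sets.
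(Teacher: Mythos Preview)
Your proposal is correct and matches the paper's approach exactly: the corollary is stated without proof as an immediate consequence of Theorems~\ref{largest} and~\ref{smallest}, and your only additional observation---that an analytic base is in particular a BP-base, so Theorem~\ref{largest} applies---is precisely the implication recorded earlier in Section~2.
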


This corollary will be used to evaluate the cardinal characteristics of non-trivial topologically invariant $\sigma$-ideals with Borel base on Euclidean spaces.

Given an ideal $\I$ on a set $X=\bigcup\I$, we shall consider the following four cardinal characteristics of $\I$:
$$\begin{aligned}
&\add(\I)=\min\{|\A|:\A\subseteq\I,\;\;\textstyle{\bigcup}\A\notin\I\},\\
&\non(\I)=\min\{|A|:A\subseteq X,\;\;A\notin\I\},\\
&\cov(\I)=\min\{|\A|:\A\subseteq\I,\;\textstyle{\bigcup}\A=X\},\\
&\cof(\I)=\min\{|\A|:\A\subseteq\I\;\;\forall B\in\I\;\;\exists A\in\A\;\;(B\subseteq A)\}.
\end{aligned}
$$

In fact, these four cardinal characteristics can be expressed using the following two cardinal characteristics defined for any pair $\I\subseteq\J$ of ideals:
$$\begin{aligned}
&\add(\I,\J)=\min\{|\A|:\A\subseteq\I,\;\;\textstyle{\bigcup}\A\notin\J\} \mbox{ \ and}\\
&\cof(\I,\J)=\min\{|\A|:\A\subseteq\J\;\;\forall B\in\I\;\exists A\in\A\;\;(B\subseteq A)\}.
\end{aligned}
$$
Namely,
$$\add(\I)=\add(\I,\I),\;\;\non(\I)=\add(\F,\I),\;\;
\cov(\I)=\cof(\F,\I),\;\;\cof(\I)=\cof(\I,\I)$$where $\F$ stands for the ideal of finite subsets of $X$.

The cardinal characteristics of the largest $\sigma$-ideal $\M$ have been thoroughly studied, see \cite{bartoszynskijudah}. The (relative) cardinal characteristics of the smallest $\sigma$-ideal $\sigma\C_0$ (in $\M$) are evaluated in the following theorem which will be proved in Section~\ref{s:main}. Theorem \ref{main} and subsequent Corollary \ref{any-ideal} are the principal results of this article.

\begin{theorem}\label{main} For the $\sigma$-ideal $\sigma\C_0$ on a Euclidean space $X=\IR^n$  the following equalities hold:
\begin{enumerate}
\item $\cov(\sigma\C_0)=\cov(\M)$;
\item $\non(\sigma\C_0)=\non(\M)$;
\item $\add(\sigma\C_0)=\add(\sigma\C_0,\M)=\add(\M)$;
\item $\cof(\sigma\C_0)=\cof(\sigma\C_0,\M)=\cof(\M)$.
\end{enumerate}
\end{theorem}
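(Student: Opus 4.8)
The plan is to exploit the sandwich $\sigma\C_0 \subseteq \M$ together with the relative cardinal characteristics $\add(\cdot,\M)$ and $\cof(\cdot,\M)$, which interpolate between the intrinsic characteristics of $\sigma\C_0$ and those of $\M$. Since $\sigma\C_0 \subseteq \M$, we automatically get the chains
\begin{gather*}
\add(\sigma\C_0) \le \add(\sigma\C_0,\M), \qquad \cof(\sigma\C_0,\M) \le \cof(\sigma\C_0),\\
\non(\sigma\C_0) \le \non(\M), \qquad \cov(\M) \le \cov(\sigma\C_0),
\end{gather*}
and also $\add(\sigma\C_0,\M) \le \add(\M)$, $\cof(\M) \le \cof(\sigma\C_0,\M)$ (since a witnessing family for the $\M$-side can be taken inside $\sigma\C_0$ only after one proves a covering/cofinality transfer). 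So the real content is the \emph{reverse} inequalities, and the single structural fact that drives all of them is: \textbf{every meager subset of $\IR^n$ is contained in a countable union of tame Cantor sets}, i.e. $\M$ and $\sigma\C_0$ have \emph{cofinally equivalent} bases in a strong, ``$\sigma$-directed'' sense. Concretely I would prove the key lemma: for every closed nowhere dense set $F \subseteq \IR^n$ there is a tame Cantor set $C \supseteq F$? — that is false as stated (a closed nowhere dense set can be connected), so instead the right statement is that every closed nowhere dense set is contained in a countable union of tame Cantor sets, equivalently in a single $F_\sigma$ member of $\sigma\C_0$. This is where the Osborn characterization quoted in the excerpt does the work: given $F$ closed nowhere dense and $\e>0$, one covers $F$ by finitely many disjoint open $n$-cells of diameter $<\e$ whose closures still miss a dense open set, and by a Baire-category bookkeeping over a sequence $\e_k \to 0$ one writes $F$ as a countable union of tame Cantor sets (using that a compact zero-dimensional-at-the-relevant-scale piece sitting in small cells can be absorbed into a tame Cantor set, again via Osborn).

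Granting that lemma, the four items follow uniformly. For \emph{(1)} and \emph{(2)}: since every meager set lies in a countable union of tame Cantor sets, a family $\A \subseteq \sigma\C_0$ covers $\IR^n$ iff the corresponding family of meager sets does, giving $\cov(\sigma\C_0) \le \cov(\M)$; dually, a set $A$ is non-meager iff it fails to lie in any countable union of tame Cantor sets, i.e. iff $A \notin \sigma\C_0$ once $|A| < \non(\sigma\C_0)$ forces... — more carefully, $\non(\M) \le \non(\sigma\C_0)$ holds because a set not in $\sigma\C_0$ is a fortiori not in $\M$ only if $\sigma\C_0 \subseteq \M$, which gives the inequality the other way, so for $\non$ I would instead argue: take $A$ witnessing $\non(\M)$; since $A \notin \M$ and $\sigma\C_0 \subseteq \M$, $A \notin \sigma\C_0$, hence $\non(\sigma\C_0) \le \non(\M)$, and combined with the trivial $\non(\sigma\C_0)\le\non(\M)$... . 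Let me restate cleanly: from $\sigma\C_0\subseteq\M$ we get $\non(\sigma\C_0)\le\non(\M)$ and $\cov(\M)\le\cov(\sigma\C_0)$ for free; the lemma gives the reverse inequalities $\cov(\sigma\C_0)\le\cov(\M)$ (cover each tame-Cantor witness, no blow-up) and $\non(\M)\le\non(\sigma\C_0)$ (if $|A|<\non(\M)$ then $A\in\M\subseteq$ some countable union of tame Cantor sets, so $A\in\sigma\C_0$, whence $\non(\sigma\C_0)\ge\non(\M)$).

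For \emph{(3)} and \emph{(4)}: the lemma shows that $\sigma\C_0$ and $\M$ have the \emph{same} additivity and cofinality through the relative invariants. Indeed $\add(\sigma\C_0,\M) = \add(\M)$ because a family $\A\subseteq\sigma\C_0$ with $\bigcup\A\notin\M$ is the same constraint as a family of meager sets with non-meager union (each member of $\A$ is meager, and conversely each meager set is $\subseteq$ a member of $\sigma\C_0$, so one can replace an $\M$-witnessing family by a $\sigma\C_0$-witnessing family of the same size). Then $\add(\sigma\C_0) = \add(\sigma\C_0,\M)$ follows because $\add(\sigma\C_0) \le \add(\sigma\C_0,\M)$ trivially, and for the reverse one uses that $\bigcup\A\notin\sigma\C_0$ implies $\bigcup\A\notin\M$ — wait, that needs $\sigma\C_0\subseteq\M$ to give $\M\text{-small}\Rightarrow\sigma\C_0\text{-small}$'s contrapositive the wrong way; the correct direction is: if $\bigcup\A\notin\sigma\C_0$, is $\bigcup\A\notin\M$? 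Yes, by the lemma, since every $\M$-set is in $\sigma\C_0$, so $\bigcup\A\in\M\Rightarrow\bigcup\A\in\sigma\C_0$; contrapositive gives $\bigcup\A\notin\sigma\C_0\Rightarrow\bigcup\A\notin\M$, hence $\add(\sigma\C_0)\ge\add(\sigma\C_0,\M)$. Symmetrically, $\cof(\sigma\C_0,\M)=\cof(\M)$ since a cofinal family in $\M$ can be replaced by tame-Cantor-generated sets of the same cardinality (replace each meager set by an $F_\sigma$ superset in $\sigma\C_0$, which exists by the lemma, and vice versa a $\sigma\C_0$-cofinal family is $\M$-cofinal because it is cofinal for all meager sets via the lemma), and $\cof(\sigma\C_0)=\cof(\sigma\C_0,\M)$ because $\cof(\sigma\C_0,\M)\le\cof(\sigma\C_0)$ trivially while a family in $\M$ cofinal for $\sigma\C_0$ can be pushed down into $\sigma\C_0$ using the lemma to absorb each meager set into a tame-Cantor union.

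The main obstacle, as flagged, is the key lemma that \emph{every closed nowhere dense subset of $\IR^n$ is covered by countably many tame Cantor sets}; everything else is cardinal-arithmetic bookkeeping built on the containment $\sigma\C_0\subseteq\M$ and this lemma. I would prove the lemma by the Osborn criterion: fix a closed nowhere dense $F$; for each $k$ cover $F$ by a finite disjoint family $\F_k$ of open $n$-cells of diameter $<2^{-k}$; refine so that $\bigcup\F_{k+1}$ has closure inside $\bigcup\F_k$ and the nerve stabilizes to a Cantor-like structure; the intersection over $k$ of carefully chosen finite unions of closed sub-cells, or rather a diagonal union across $k$, exhibits $F$ inside a countable union of sets each satisfying Osborn's finite-cover condition at every scale — hence each a tame Cantor set (after padding with an extra Cantor set to kill isolated points while staying tame). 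The delicate point is ensuring \emph{tameness} (the $Z_2$-set condition for $n\ge 3$), which is exactly what Osborn's characterization is designed to certify, so the construction must be carried out with the cells genuinely disjoint and open-in-$\IR^n$ at every stage.
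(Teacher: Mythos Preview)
Your entire argument rests on the ``key lemma'' that every closed nowhere dense subset of $\IR^n$ is contained in a countable union of tame Cantor sets, i.e.\ that $\M\subseteq\sigma\C_0$. This lemma is \emph{false} for $n\ge 2$. A Cantor set is zero-dimensional, and by the countable closed sum theorem a countable union of Cantor sets is again zero-dimensional; hence the line segment $\II=[0,1]\times\{0\}^{n-1}$, which is closed, nowhere dense, and $1$-dimensional, cannot lie in $\sigma\C_0$. (This is exactly the example the paper uses to show that the ideal $\I_\II$ strictly contains $\sigma\C_0$ and has $\add(\I_\II)=\omega_1$, $\cof(\I_\II)=\mathfrak c$.) Your attempted proof of the lemma fails at the first step: you propose to cover a closed nowhere dense $F$ by a finite family of \emph{pairwise disjoint} open $n$-cells of small diameter, but if $F$ is connected (e.g.\ $F=\II$) this is impossible. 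Osborn's criterion characterizes tame Cantor sets, not arbitrary nowhere dense sets; the disjoint-cell condition it demands is precisely what a connected set will never satisfy.

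Because the lemma fails, none of the four reverse inequalities follow from your scheme, and the argument has to proceed quite differently. The paper's proof uses two separate mechanisms. For $\cov$ and $\non$ it exploits product structure: products $\prod_{k=1}^n A_k$ of meager subsets $A_k\subseteq\IR$ land in $\sigma\C_0(\IR^n)$ (Lemma~\ref{l4.3}), which gives $\cov(\sigma\C_0)\le\cov(\M(\IR))^n=\cov(\M)$ and $\non(\sigma\C_0)\ge\non(\M)$ directly. For $\add$ and $\cof$ it uses the Polish homeomorphism group $\HH(\IR^n)$: for any tame Cantor set $C$ and any dense $G_\delta$-set $G$, the set of $h\in\HH(\IR^n)$ with $h(C)\subseteq G$ is comeager (Lemma~\ref{l3.5}). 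Taking $G=(\IR\setminus\IQ)^n\cong\omega^\omega$, one can move fewer than $\add(\M)$ many tame Cantor sets simultaneously into $G$ by a single homeomorphism, then absorb them into one $\sigma$-compact $K\subseteq G$ using $\add(\sigma\K)=\mathfrak b\ge\add(\M)$; pulling back, $K\in\sigma\C_0$ by Lemma~\ref{l4.4}. The $\cof$ argument is dual, using a non-meager set of homeomorphisms of size $\non(\M)$ together with a base for $\sigma\K(G)$ of size $\mathfrak d$, and the Fremlin identity $\cof(\M)=\max\{\non(\M),\mathfrak d\}$. None of this is visible from your proposal; the homeomorphism-group/Baire-category technique is the essential missing idea.
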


Corollary~\ref{minimax} and Theorem~\ref{main} imply the following corollary.

\begin{corollary}\label{any-ideal} For any non-trivial topologically invariant $\sigma$-ideal $\I$ with analytic base on a Euclidean space $X=\IR^n$ we get:
\begin{enumerate}
\item $\cov(\I)=\cov(\M)$;
\item $\non(\I)=\non(\M)$;
\item $\add(\I)\le\add(\M)$;
\item $\cof(\I)\ge\cof(\M)$.
\end{enumerate}
\end{corollary}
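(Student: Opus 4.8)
The plan is to deduce the corollary purely formally from the containments $\sigma\C_0\subseteq\I\subseteq\M$ furnished by Corollary~\ref{minimax} together with the (relative) equalities of Theorem~\ref{main}; the only tools needed are the elementary monotonicity properties of the four cardinal characteristics with respect to inclusion of ideals, so all the substance of the statement is already packed into Theorem~\ref{main}.

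First I would record how the characteristics vary with the ideal. For ideals $\I\subseteq\J$ on a set $X$ one checks straight from the definitions that $\cov(\J)\le\cov(\I)$ (an $\I$-cover of $X$ is a $\J$-cover) and $\non(\I)\le\non(\J)$ (a set outside $\J$ is outside $\I$), while for the relative versions $\add(\cdot,\cdot)$ is non-increasing in the first argument and non-decreasing in the second, and $\cof(\cdot,\cdot)$ is non-decreasing in the first and non-increasing in the second. I would also use the identities $\add(\I)=\add(\I,\I)$ and $\cof(\I)=\cof(\I,\I)$ noted before Theorem~\ref{main}.

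For items (1) and (2) the containments give exact equalities. From $\sigma\C_0\subseteq\I\subseteq\M$ and antitonicity of $\cov$ we get $\cov(\M)\le\cov(\I)\le\cov(\sigma\C_0)$, and since $\cov(\sigma\C_0)=\cov(\M)$ by Theorem~\ref{main}(1), all three numbers coincide. Symmetrically, monotonicity of $\non$ gives $\non(\sigma\C_0)\le\non(\I)\le\non(\M)$, and $\non(\sigma\C_0)=\non(\M)$ by Theorem~\ref{main}(2) forces $\non(\I)=\non(\M)$.

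For items (3) and (4) I would route through the relative characteristics, where only one-sided comparisons survive. Using $\add(\I)=\add(\I,\I)$, the inclusion $\sigma\C_0\subseteq\I$ in the first slot, and $\I\subseteq\M$ in the second slot, one gets $\add(\I)=\add(\I,\I)\le\add(\sigma\C_0,\I)\le\add(\sigma\C_0,\M)=\add(\M)$ by Theorem~\ref{main}(3). Dually, $\cof(\I)=\cof(\I,\I)\ge\cof(\sigma\C_0,\I)\ge\cof(\sigma\C_0,\M)=\cof(\M)$ by Theorem~\ref{main}(4). There is no real obstacle here; the only point requiring care is keeping the four directions of variance straight, and this asymmetry is exactly why (3) and (4) come out as inequalities rather than equalities — the sandwich controls $\add(\I)$ and $\cof(\I)$ from one side only.
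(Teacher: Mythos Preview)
Your proof is correct and is exactly the argument the paper intends: the paper merely states that Corollary~\ref{minimax} and Theorem~\ref{main} imply the result, and you have spelled out the monotonicity-and-sandwich reasoning that makes this implication work. Your routing of (3) and (4) through the relative characteristics $\add(\sigma\C_0,\M)$ and $\cof(\sigma\C_0,\M)$ is precisely why Theorem~\ref{main} records those relative equalities and not just the absolute ones.
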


Thus, for $X=\IR^n$, the following variant of Cicho\'n's diagram describes relations between cardinal characteristics of the ideal $\M$ and any non-trivial topologically invariant $\sigma$-ideal $\I$ ($a\to b$ stands for  $a\le b$):

$$
\xymatrix{
&\non(\I)\ar@{=}[r]&\non(\M)\ar[r]&\cof(\M)\ar[r]&\cof(\I)\ar[r]&\mathfrak c\\
\w_1\ar[r]&\add(\I)\ar[u]\ar[r]&\add(\M)\ar[u]\ar[r]
&\cov(\M)\ar[u]\ar@{=}[r]&\cov(\I)\ar[u]&
}
$$
\smallskip

The following example shows that the inequalities $\add(\I)\le\add(\M)$ and $\cof(\M)\le\cof(\I)$ can be strict.

Below for a subset $A$ of a Polish space $X$ by $\I_A$ we denote the smallest topologically invariant $\sigma$-ideal containing the set $A$. It consists of all subsets of countable unions $\bigcup_{n\in\w}^\infty h_n(A)$ where $h_n:X\to X$, $n\in\w$, are homeomorphisms of $X$.

\begin{example}  The  $\sigma$-ideal $\I_\II\subseteq\mathcal{P}(\IR^2)$ generated by the interval $\II=[0,1]\times\{0\}$ in the plane $\mathbb R^2$ has cardinal characteristics:
$$\add(\I_\II)=\w_1,\;\;\non(\I_\II)=\non(\M),\;\;\cov(\I_\II)=\cov(\M),
\;\mbox{ and }\;\cof(\I_\II)=\mathfrak c.$$
\end{example}

\begin{proof}
The equalities $\cov(\I_\II)=\cov(\M)$ and $\non(\I_\II)=\non(\M)$ follow from Corollary \ref{any-ideal}.

The equality $\add(\I_\II)=\w_1$ will follow if we check that $\bigcup_{t\in T}[0,1]\times\{t\}\notin\I_\II$ for any uncountable subset $T\subseteq [0,1]$. Assuming the opposite, we can find a homeomorphism $h:\IR^2\rightarrow\IR^2$ such that the set
 $$\{t\in T:\ h(\II)\cap \big([0,1]\times\{t\}\big) \text{ contains a line segment}\}
 $$
 is uncountable. It yields an uncountable family of pairwise disjoint proper intervals in $[0,1]$ which is not possible.

To show that $\cof(\I_\II)=\mathfrak c$, choose any subfamily
 $\B\subseteq\I_\II$ of cardinality $|\mathcal B|=\cof(\I_\II)$ such that each set $A\in\I_\II$ is  contained in some set $B\in\mathcal B$. Let $\X=\{ [0,1]\times\{x\}:\ x\in\IR\}$. Notice that every member of $\B$ contains at most countably many members of the family $\X.$ This implies that $|\B|=\mathfrak c$.
\end{proof}

Corollary~\ref{any-ideal} will be applied to calculate the cardinal characteristics of the $\sigma$-ideal
$\sigma\oD_k$ generated by closed subsets of dimension $\le k$ in the Euclidean space $\IR^n$.
By \cite[1.8.11]{En2}, the ideal $\sigma\oD_{n-1}$ coincides with the ideal $\M$ of meager subsets of $\IR^n$.
The following theorem will be proved in Section~\ref{s:tD}.

\begin{theorem}\label{tD} For every number $0\le k<n$ the $\sigma$-ideal $\sigma\oD_k$ generated by closed at most $k$-dimensional
subsets of $\IR^n$ has cardinal characteristics:
$$\add(\sigma\oD_k)=\add(\M),\;\;
\cov(\sigma\oD_k)=\cov(\M),\;\;\non(\sigma\oD_k)=\non(\M),\;\;\cof(\sigma\oD_k)=\cof(\M).
$$
\end{theorem}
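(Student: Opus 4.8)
The plan is to get everything from Corollary~\ref{any-ideal} and Theorem~\ref{main} except for a tightness property of $\sigma\oD_k$ inside $\M$, which is the one step that really needs work. First note that $\sigma\oD_k$ is a non-trivial topologically invariant $\sigma$-ideal with an $F_\sigma$ (hence Borel, hence analytic) base: homeomorphisms preserve closedness and dimension, and a countable union of closed at most $k$-dimensional sets is $F_\sigma$ and, by the countable closed sum theorem for dimension, again at most $k$-dimensional; the ideal is non-trivial because it contains an uncountable Cantor set lying on the line $\IR\times\{0\}^{n-1}$ but does not contain $\IR^n$, since by the Baire category theorem a countable union of closed at most $k$-dimensional --- hence, as $k<n$, nowhere dense --- sets is meager. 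Moreover every tame Cantor set is closed and zero-dimensional and every closed at most $k$-dimensional subset of $\IR^n$ is nowhere dense, so $\sigma\C_0\subseteq\sigma\oD_k\subseteq\M$. Thus Corollary~\ref{any-ideal} gives at once $\cov(\sigma\oD_k)=\cov(\M)$, $\non(\sigma\oD_k)=\non(\M)$, $\add(\sigma\oD_k)\le\add(\M)$ and $\cof(\sigma\oD_k)\ge\cof(\M)$; for $k=n-1$ nothing more is needed, since then $\sigma\oD_{n-1}=\M$ by \cite[1.8.11]{En2}, so assume $k<n-1$.

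It remains to prove $\add(\sigma\oD_k)\ge\add(\M)$ and $\cof(\sigma\oD_k)\le\cof(\M)$. For orientation, the corresponding \emph{relative} invariants come for free: by monotonicity of $\add(\cdot,\cdot)$ and $\cof(\cdot,\cdot)$ in the first argument, together with $\sigma\C_0\subseteq\sigma\oD_k\subseteq\M$ and Theorem~\ref{main}(3),(4),
$$\add(\M)=\add(\sigma\C_0,\M)\ge\add(\sigma\oD_k,\M)\ge\add(\M),\qquad \cof(\M)=\cof(\sigma\C_0,\M)\le\cof(\sigma\oD_k,\M)\le\cof(\M),$$
where the equalities are Theorem~\ref{main} and the rightmost inequality in each chain uses $\sigma\oD_k\subseteq\M$; hence $\add(\sigma\oD_k,\M)=\add(\M)$ and $\cof(\sigma\oD_k,\M)=\cof(\M)$. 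Since also $\add(\sigma\oD_k)\le\add(\sigma\oD_k,\M)$ and $\cof(\sigma\oD_k)\ge\cof(\sigma\oD_k,\M)$ (again because $\sigma\oD_k\subseteq\M$), the theorem is reduced to two tightness statements: (a) every union of fewer than $\add(\M)$ closed at most $k$-dimensional subsets of $\IR^n$ belongs to $\sigma\oD_k$, and (b) $\sigma\oD_k$ has a cofinal subfamily of size $\le\cof(\M)$. That such tightness genuinely has to be proved, and does not follow from the sandwich alone, is shown by the ideal $\I_\II$ generated by a line segment in $\IR^2$: it satisfies $\sigma\C_0\subseteq\I_\II\subseteq\sigma\oD_1$ yet $\add(\I_\II)=\w_1$, possibly well below $\add(\M)$.

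To prove (a) and (b) I would establish a Tukey reduction $(\sigma\oD_k,\subseteq)\le_T(\M,\subseteq)$, which gives $\add(\sigma\oD_k)\ge\add(\M)$ and $\cof(\sigma\oD_k)\le\cof(\M)$ simultaneously. Writing each closed at most $k$-dimensional set as a countable union of compact pieces, the task becomes to trap, in a Borel fashion, an arbitrary compact at most $k$-dimensional $K\subseteq\IR^n$ inside a member of a single Polish-parametrised family of closed at most $k$-dimensional ``skeleta''. Here the dimension-theoretic finite-cover characterisation does the work: for every scale $2^{-j}$ one may cover $K$ by a finite family of rational balls of diameter $<2^{-j}$ and order $\le k+1$, and recording these finite patterns defines a continuous map from the (Polish) space of compact at most $k$-dimensional subsets of $\IR^n$ into $\w^\w$ whose $\le^*$-boundedness controls whether a family of such sets has its union trapped in one $F_\sigma$ at most $k$-dimensional set, and whose ``infinitely equal'' behaviour controls whether countably many skeleta dominate. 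Feeding this parametrisation into the Bartoszyński-style combinatorics of $\cov(\M)$, $\non(\M)$, $\mathfrak b$ and $\mathfrak d$ already used for $\sigma\C_0$ in Section~\ref{s:main} then yields the two missing inequalities.

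The main obstacle is exactly this parametrisation. One cannot simply reduce $\sigma\oD_k$ to $\sigma\C_0$: already a line segment lies in $\sigma\oD_1\setminus\sigma\C_0$, and for small codimension $n-k\le 2$ there exist wildly embedded compacta (wild Cantor sets when $k=0$, $n\ge3$; knotted arcs and spheres in codimension $2$), so no single tamely embedded at most $k$-dimensional subset of $\IR^n$ can contain every compact at most $k$-dimensional subset of $\IR^n$ up to ambient homeomorphism. What rescues the argument is that wildness is invisible to the coarse combinatorial data above: any compact at most $k$-dimensional set, however it is embedded, is covered at each scale by finitely many small at most $k$-dimensional pieces whose combinatorial type is a point of a Polish space, and that is precisely the information the cardinal characteristics of $\M$ detect.
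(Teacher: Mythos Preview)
Your reduction to the two inequalities $\add(\sigma\oD_k)\ge\add(\M)$ and $\cof(\sigma\oD_k)\le\cof(\M)$ is correct and matches the paper exactly, including the observation that the relative invariants $\add(\sigma\oD_k,\M)$ and $\cof(\sigma\oD_k,\M)$ come for free from the sandwich $\sigma\C_0\subseteq\sigma\oD_k\subseteq\M$. The gap is in the proposed Tukey reduction. Your coding sends a compact $K$ with $\dim K\le k$ to a record of, for each $j$, a finite cover of $K$ by small rational balls of order $\le k+1$. But nothing in this data lets you \emph{trap} a $\le^*$-bounded family inside one $F_\sigma$ set of dimension $\le k$: if for each $j$ the covers used are among $\mathcal F^{\,j}_0,\dots,\mathcal F^{\,j}_{g(j)-1}$, the only candidate containment is $\bigcap_j\bigcup_{i<g(j)}\overline{\bigcup\mathcal F^{\,j}_i}$, and each $\overline{\bigcup\mathcal F^{\,j}_i}$ is a finite union of closed $n$-balls, so this intersection is a closed set with no dimension control at all. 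The crucial difficulty is not the wildness of individual embeddings but the fact that superposing many covers of order $\le k+1$ destroys the order bound; your claim that ``wildness is invisible to the coarse combinatorial data'' is precisely the point that needs an argument and does not follow from the cover characterisation of dimension. You never actually name the ``Polish-parametrised family of closed at most $k$-dimensional skeleta'' that would serve as the targets of the reduction, and I do not see how to produce one from the ingredients you list.

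The paper handles the two missing inequalities by an entirely different mechanism, importing a deep result from geometric topology. It embeds $\IR^n$ into $\IR^m$ with $m\ge n+4$, so that every closed $\le k$-dimensional subset of $\IR^n$ becomes a $Z_2$-set in $\IR^m$; then it invokes the Geoghegan--Summerhill construction of a $\Z^*_{m-k-1}$-pseudoboundary $\Sigma\subset\IR^m$, enlarged to a dense $k$-dimensional $G_\delta$-set $G\subset\IR^m$. The pseudoboundary property says exactly that for each compact $A\in\oD_k\cap\Z_2$ the set $\HH^G_A=\{h\in\HH(\IR^m):h(A)\subset G\}$ is dense $G_\delta$ in $\HH(\IR^m)$, and from that point the argument is a verbatim rerun of the $\sigma\C_0$ proof: for $\add$, intersect fewer than $\cov(\M)$ such comeager sets to push all the $A$'s into $G$, then use $\add(\sigma\K(G))=\mathfrak b$ and pull back by $h^{-1}$; for $\cof$, pick a non-meager $H\subset\HH(\IR^m)$ of size $\non(\M)$ and a base $\mathcal D$ for $\sigma\K(G)$ of size $\mathfrak d$, and take $\{\IR^n\cap h^{-1}(D):h\in H,\,D\in\mathcal D\}$. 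The single target $G$ plays the role your ``skeleta'' were meant to play, and its existence is exactly what the pseudoboundary theorem supplies.
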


We finish this introductory section with two open problems.
A topologically invariant $\sigma$-ideal $\I$ will be called {\em 1-generated} if $\I=\I_A$ for some subset
%$A\subseteq X=\bigcup\I$
$A\in\mathcal{I}$.
Observe that  the $\sigma$-ideals $\sigma\C_0$ and $\M$ on $X=\IR^n$ are 1-generated: the $\sigma$-ideal $\sigma\C_0$ is generated by any tame Cantor set in $X$, while $\M$ is generated by the generalized Menger cube $M^n_{n-1}$, see \cite{Menger}, \cite[p.128]{Chi}.

\begin{problem} What are the cardinal characteristics of a 1-generated topologically invariant $\sigma$-ideal $\I_A$ with Borel base on a Euclidean space $X=\IR^n$. Is it true that $\add(\I)\in\{\w_1,\add(\M)\}$ and $\cof(\I)\in\{\cof(\M),\mathfrak c\}$ for any such ideal $\I$?
\end{problem}

Corollary~\ref{minimax} implies that $\M=\sigma\C_0$ is the unique topologically invariant $\sigma$-ideal with analytic base on the real line $\IR^1$. For higher-dimensional Euclidean spaces the ideals $\sigma\C_0$ and $\M$ are distinct.

\begin{problem}[M. Sabok] What is the cardinality  of the family of all topologically invariant $\sigma$-ideals with Borel base on a Euclidean space $X=\IR^n$ for $n\ge 2$. Is this cardinality equal to $2^{\mathfrak c}$?
\end{problem}

\section{Some properties of tame Cantor sets in Euclidean spaces}

In this section we shall establish some auxiliary facts related to tame Cantor sets and homeomorphism groups of Euclidean spaces. These facts will be used in the proof of Theorem~\ref{main}.

Corollary 2 of \cite{McM} or the characterization \cite{Osborn} of tame Cantor sets imply:

\begin{lemma}\label{l3.1} For any Cantor sets $C_1,\dots,C_n$ in the real line, the product $\prod_{i=1}^nC_i$ is a tame Cantor set in $\IR^n$.
\end{lemma}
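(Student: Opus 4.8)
The plan is to reduce the statement to Osborn's characterization of tame Cantor sets quoted in the text: a closed subset $C\subseteq\IR^n$ is a tame Cantor set if and only if for every $\e>0$ it can be covered by a finite family $\F$ of pairwise disjoint open sets, each homeomorphic to $\IR^n$ and of diameter $<\e$. So, given Cantor sets $C_1,\dots,C_n\subseteq\IR$, I first note that $C=\prod_{i=1}^n C_i$ is a compact, zero-dimensional subset of $\IR^n$ with no isolated points, hence a Cantor set by Brouwer's characterization; the only real work is to produce, for each $\e>0$, the required cover by small disjoint open $n$-cells.

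The key step is a product construction of the cover. Since each $C_i$ is a Cantor set in $\IR$, for any $\delta>0$ it is contained in a finite disjoint union of open intervals $\bigcup_{j\in F_i} U^i_j$ with each $U^i_j$ of diameter $<\delta$; moreover, by shrinking, we may assume the closures $\overline{U^i_j}$ are pairwise disjoint as well. Taking $\delta=\e/(2\sqrt n)$ and forming the products $B_{\bar\jmath}=\prod_{i=1}^n U^i_{j_i}$ over all tuples $\bar\jmath=(j_1,\dots,j_n)\in\prod_i F_i$, I get a finite family of pairwise disjoint open boxes in $\IR^n$, each homeomorphic to $\IR^n$ (an open box is homeomorphic to $\IR^n$), of diameter $<\e$, whose union contains $C$. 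Disjointness of the boxes follows from disjointness of the factor intervals in at least one coordinate. This is exactly the data Osborn's criterion requires, so $C$ is a tame Cantor set.

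The main obstacle — really the only subtlety — is making sure the covering open sets can be taken to be genuinely homeomorphic to $\IR^n$ and pairwise disjoint simultaneously; this is why one works with open boxes (products of open intervals) rather than, say, closed cubes from a grid, which would overlap on their boundaries. Once the intervals in each coordinate are chosen disjoint with diameter $<\e/(2\sqrt n)$, the products are automatically disjoint open $n$-cells of diameter $<\e$, and the verification is complete. Alternatively, one can cite Corollary 2 of \cite{McM} directly, which handles products of tame (here, automatically tame, since we are on the line) Cantor sets; but the Osborn-based argument above is self-contained given the characterization already recalled in the text.
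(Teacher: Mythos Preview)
Your proof is correct and follows exactly the route the paper indicates: the paper itself gives no argument beyond the sentence ``Corollary 2 of \cite{McM} or the characterization \cite{Osborn} of tame Cantor sets imply,'' and you have simply (and correctly) unpacked the Osborn-based verification by covering each factor with small disjoint open intervals and taking product boxes. Your closing remark about citing \cite{McM} directly matches the paper's other suggested reference, so there is no divergence in approach.
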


\begin{lemma}\label{l3.2} For each $\e>0$, each compact nowhere dense subset $C\subseteq\IR$ and each open dense subset $U\subseteq \IR$ there is a homeomorphism $h:\IR\to\IR$ such that $h(C)\subseteq U$ and $\sup_{x\in\IR}|h(x)-x|<\e$.
\end{lemma}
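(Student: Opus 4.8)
The plan is to build $h$ as a composition (or limit) of small ``wiggling'' homeomorphisms, each of which pushes a little piece of the compact nowhere dense set $C$ into the open dense set $U$, while moving no point by more than the allotted budget. First I would fix $\e>0$ and reduce to the case where $C\subseteq(a,b)$ for some bounded interval, since $C$ is compact; outside a slightly larger bounded interval $h$ will be the identity. The key local move: since $C$ is nowhere dense, every point of $C$ has arbitrarily small open intervals $J$ disjoint from $C$; since $U$ is open and dense, $U$ contains a nonempty open subinterval of every interval, in particular an interval $J'\subseteq U$. Given a small closed interval $[p,q]$ with $[p,q]\cap C$ nonempty and with an adjacent $C$-free ``gap'' $J$ on one side inside $[p,q]$, there is a piecewise-linear self-homeomorphism of $[p,q]$, fixing the endpoints, that compresses $[p,q]\cap C$ into an arbitrarily small subinterval (pushing it toward and into the gap region), with displacement bounded by the length of $[p,q]$.

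Next I would run a covering/compactness argument. Cover $C$ by finitely many open intervals $I_1,\dots,I_m$, each of length $<\e$, each meeting a $C$-free gap, and with closures pairwise disjoint (possible because $C$ is nowhere dense, so the gaps are dense). On each $I_j$ we have a subinterval $U_j\subseteq U\cap I_j$ that is nonempty open. Using the local move above, construct a homeomorphism $h_j$ supported in $I_j$ that carries $C\cap I_j$ into $U_j$; displacement $<|I_j|<\e$. Since the supports $\overline{I_j}$ are pairwise disjoint, the $h_j$ commute and their ``union'' $h=h_m\circ\cdots\circ h_1$ is a well-defined homeomorphism of $\IR$ (identity off $\bigcup_j I_j$) with $\sup_x|h(x)-x|<\e$ and $h(C)=\bigcup_j h_j(C\cap I_j)\subseteq\bigcup_j U_j\subseteq U$.

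The main obstacle is the local move: one must verify that a single $C$-free gap inside a short interval $I_j$ really does let us PL-compress the (possibly Cantor-like, positive-in-the-sense-of-nonempty) set $C\cap I_j$ into a prescribed tiny subinterval $U_j$ of $U$. This is handled by noting that $C\cap\overline{I_j}$ is compact nowhere dense, hence contained in $I_j\setminus K$ for a nonempty open interval $K$; then take a PL homeomorphism of $\overline{I_j}$ fixing $\partial I_j$, mapping $I_j\setminus K$ into any prescribed nonempty open subinterval of $I_j$ (in particular into $U_j$, after possibly first sliding $U_j$ to overlap the image) — this is an elementary ``two-tent'' map, and its displacement is automatically bounded by $|I_j|$. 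A harmless extra step is to choose $U_j$ inside the same component of $I_j\setminus C$ one is compressing toward, so no sliding is needed; alternatively, absorb the small slide into the same budget. Everything else — that a finite composition of homeomorphisms with disjoint supports is a homeomorphism, and that displacements do not accumulate across disjoint supports — is routine.
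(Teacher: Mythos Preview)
Your approach is correct and is essentially the paper's own: cover $C$ by finitely many intervals of length $<\e$ with endpoints in $\IR\setminus C$, pick in each a subinterval of $U$, and use a piecewise-linear map---the paper simply writes one global PL map $\alpha_i\mapsto\beta_i$ rather than composing locally supported pieces, which is cosmetic. One line should be tightened: you cannot map $I_j\setminus K$ into a prescribed small subinterval while fixing $\partial I_j$, since $I_j\setminus K$ accumulates at the endpoints; what you really use (and what suffices) is that once the endpoints of $I_j$ are chosen outside $C$, the compact set $C\cap\overline{I_j}$ lies in some $[a',b']\subset I_j$, and a PL self-map of $\overline{I_j}$ fixing the endpoints can send $[a',b']$ into any prescribed nonempty open $U_j\subseteq U\cap I_j$.
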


\begin{proof} Since the set $C$ is compact and nowhere dense in $\IR$, we can choose an increasing sequence of real numbers
$$\alpha_0<\alpha_1<\ldots<\alpha_{2r}<\alpha_{2r+1}$$such that $C\subseteq \bigcup_{i=1}^r[\alpha_{2i-1},\alpha_{2i}]$ and  $\alpha_{i+1}-\alpha_i<\e$ for all $0\leq i\le 2r$.

Since $U$ is open and dense in $\IR$, for each $1\le i\le r$ we can choose two real numbers $\beta_{2i-1}<\beta_{2i}$ such that $[\beta_{2i-1},\beta_{2i}]\subseteq[\alpha_{2i-1},\alpha_{2i}]\cap U$.
Let $h:\IR\to\IR$ be the piecewise linear homeomorphism defined by the conditions:
\begin{itemize}
\item $h(x)=x$ for $x\in \IR\setminus(\alpha_0,\alpha_{2r+1})$,
\item $h(\alpha_i)=\beta_i$ for $0<i\le 2r$,
\item $h$ is linear on each segment $[\alpha_{i-1},\alpha_{i}]$, $0< i\le 2r+1$.
\end{itemize}
We have
$$\sup_{x\in\IR}|h(x)-x|\le\max_{1\le i\le 2r}|\beta_i-\alpha_i|\le\max_{1\le i\le r}|\alpha_{2i}-\alpha_{2i-1}|<\e$$ and
$$h(C)\subseteq \bigcup_{i=1}^r h([\alpha_{2i-1},\alpha_{2i}])=\bigcup_{i=1}^r[\beta_{2i-1},\beta_{2i}]\subseteq U.$$
\end{proof}

In the following lemma by $\|\cdot\|$ we denote the Euclidean norm on $\IR^n$.

\begin{lemma}\label{l3.3} For each $\e>0$, each compact nowhere dense subset $C\subseteq\IR\times\{0\}^{n-1}$ and each dense open set $U\subseteq\IR^n$ there is a homeomorphism $h:\IR^n\to\IR^n$ such that $h(C)\subseteq U$ and $\sup_{x\in\IR^n}\|h(x)-x\|<\e$.
\end{lemma}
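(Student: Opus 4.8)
The plan is to reduce the statement, by a compactness argument inside the line $L=\IR\times\{0\}^{n-1}$, to finitely many pairwise disjoint small boxes, to push the relevant piece of $C$ into $U$ inside each box by a homeomorphism supported there, and then to glue these local homeomorphisms together. Identifying $L$ with $\IR$ (in which $C$ is compact and nowhere dense), the opening step of the proof of Lemma~\ref{l3.2} provides, for a sufficiently small $\delta>0$, reals $\alpha_0<\alpha_1<\dots<\alpha_{2r+1}$ with $C\subseteq\bigcup_{i=1}^r\big([\alpha_{2i-1},\alpha_{2i}]\times\{0\}^{n-1}\big)$ and $\alpha_{j+1}-\alpha_j<\delta$ for all $j$. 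The intervals $[\alpha_{2i-1},\alpha_{2i}]$ are pairwise disjoint, so the least gap $\mu:=\min_{1\le i<r}(\alpha_{2i+1}-\alpha_{2i})$ is positive (when $r\ge 2$), and for a sufficiently small $\rho\in(0,\mu/2)$ the open boxes $N_i:=(\alpha_{2i-1}-\rho,\alpha_{2i}+\rho)\times(-\rho,\rho)^{n-1}$, $i=1,\dots,r$, have pairwise disjoint closures, have diameter $<\e$, and contain the segments $S_i:=[\alpha_{2i-1},\alpha_{2i}]\times\{0\}^{n-1}$ at positive distance from $\partial N_i$; moreover $C\subseteq\bigcup_{i=1}^rS_i$.

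The crux is the claim that for each $i$ there is a homeomorphism $h_i$ of $\IR^n$ which equals the identity off a compact subset of $N_i$ and satisfies $h_i(S_i)\subseteq U$. Granting it, the map $h$ that equals $h_i$ on $N_i$ and the identity elsewhere is a well-defined homeomorphism of $\IR^n$, because the $N_i$ have pairwise disjoint closures and each $h_i$ is the identity near $\partial N_i$; since $h_i$ maps $N_i$ onto itself, $h$ displaces each point by at most $\max_i\operatorname{diam}(N_i)<\e$, and $h(C)\subseteq\bigcup_i h_i(S_i)\subseteq U$, as required.

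To prove the claim, fix $i$ and pass to coordinates centered at the midpoint of $S_i$, so that $N_i=(-A,A)\times(-\rho,\rho)^{n-1}$ and $S_i=[-\ell,\ell]\times\{0\}^{n-1}$ with $0<\ell<A$. Since $U$ is dense, $U\cap N_i$ is a nonempty open set; choose $w\in U\cap N_i$ and a small $\eta>0$ with $\overline{B(w,\eta)}\subseteq U\cap N_i$. I shall take $h_i=\chi\circ\psi$, where $\psi$ and $\chi$ are homeomorphisms of $\IR^n$ supported in compact subsets of $N_i$. The homeomorphism $\psi$ compresses $S_i$ along its own direction: it acts on the first coordinate by an increasing homeomorphism of $\IR$ that contracts $[-\ell,\ell]$ towards $0$ by a tiny factor and is the identity outside some interval $[-A',A']$ with $\ell<A'<A$, this action being damped to the identity as the transverse coordinates leave a smaller cube $(-\rho',\rho')^{n-1}$; thus $\psi(S_i)$ is a segment of diameter $<\eta$ sitting near the origin on the first axis. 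The homeomorphism $\chi$ then drags $\psi(S_i)$ into $B(w,\eta)\subseteq U$: it carries a small ball about the origin onto a small ball about $w$ and is obtained by sliding along a polygonal arc in $N_i$ from the origin to $w$, using the homogeneity of $\IR^n$ with small support.

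I expect the first move $\psi$ to be the main obstacle. One cannot simply contract $S_i$ radially to a point inside $N_i$, since $S_i$ may be longer than $N_i$ is wide in the transverse directions, so a radial contraction engulfing $S_i$ would have support escaping $N_i$. Compressing $S_i$ only along the first coordinate, inside a thin slab around it, circumvents this; the remaining work — verifying that this slab construction yields a homeomorphism of $\IR^n$ that is the identity off a compact subset of $N_i$, producing the sliding homeomorphism $\chi$, and choosing $\delta,\rho,\eta$ small enough — is elementary.
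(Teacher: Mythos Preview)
Your argument is correct: the covering of $C$ by disjoint small boxes, the compactly supported compression $\psi$ shrinking each segment $S_i$ along its own axis (damped in the transverse directions), and the drag $\chi$ built from the local homogeneity of $\IR^n$ all go through, and the gluing is unproblematic since the $N_i$ have pairwise disjoint closures.

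However, your route is genuinely different from, and considerably more laborious than, the paper's. The paper does not localize at all. Instead it observes, via the Kuratowski--Ulam theorem, that for some $\mathbf b\in\IR^{n-1}$ with $\|\mathbf b\|<\e/2$ the slice $U\cap(\IR\times\{\mathbf b\})$ is dense in the line $\IR\times\{\mathbf b\}$; then Lemma~\ref{l3.2} applied on $\IR$ yields a one-dimensional homeomorphism $f$ with $\sup_x|f(x)-x|<\e/2$ carrying the projection of $C$ into $i_{\mathbf b}^{-1}(U)$, and the single global map $h(x,\mathbf y)=(f(x),\mathbf y+\mathbf b)$ does the job. Thus the paper reduces the $n$-dimensional statement directly to the already-proved one-dimensional Lemma~\ref{l3.2}, at the cost of invoking Kuratowski--Ulam. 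Your approach avoids Kuratowski--Ulam and is entirely self-contained and geometric, but pays for it by having to construct two families of compactly supported homeomorphisms (the damped axial compression and the sliding isotopy) by hand; you also only use the opening decomposition from Lemma~\ref{l3.2} rather than its conclusion. Both proofs are valid; the paper's is shorter and makes clearer why Lemma~\ref{l3.2} was isolated in the first place.
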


\begin{proof} Let us represent the space $\IR^n$ as the product $\IR\times\IR^{n-1}$ and for each $\mathbf b\in\IR^{n-1}$ let  $i_{\mathbf b}:\IR\to \IR\times\{\mathbf b\}\subseteq\IR^n$ be the isometry $i_{\mathbf b}:x\mapsto (x,\mathbf b)$.
By $\mathbf 0$ we denote the zero vector of the space $\IR^{n-1}$.

Since $U$ is open and dense in $\IR^n=\IR\times\IR^{n-1}$, we can apply the Kuratowski-Ulam Theorem (see e.g. Corollary 1a sec. 22, V in \cite{kuratowski} or Theorem 8.41 in \cite{Ke}), and find a point $\mathbf b\in\IR^{n-1}$ such that $\|(0,\mathbf b)\|<\e/2$ and the set $U\cap(\IR\times\{\mathbf b\})$ is dense in $\IR\times\{\mathbf b\}$. Then the set $i^{-1}_{\mathbf b}(U)$ is open and dense in $\IR$. Since the set $i^{-1}_{\mathbf 0}(C)$ is compact and nowhere dense in the real line $\IR$, we can apply Lemma~\ref{l3.2} and find a homeomorphism $f:\IR\to\IR$ such that $f(i_{\mathbf 0}^{-1}(C))\subseteq i_{\mathbf b}^{-1}(U)$ and $\sup_{x\in\IR}|f(x)-x|<\e/2$.

The homeomorphism $f$ induces a homeomorphism $$h:\IR\times\IR^{n-1}\to\IR\times\IR^{n-1},\;h:(x,\mathbf y)\mapsto (f(x),\mathbf y+\mathbf b)$$
for which we have
$$h(C)=h(i_{\mathbf 0}^{-1}(C)\times\{\mathbf 0\})=f(i_{\mathbf 0}^{-1}(C))\times\{\mathbf{b}\}\subseteq
i_b^{-1}(U)\times\{\mathbf b\}=U\cap(\IR\times\{\mathbf b\})\subseteq U$$ and
$$\sup_{(x,\mathbf y)\in\IR\times\IR^{n-1}}\|h(x,\mathbf y)-(x,\mathbf y)\|=
\sup_{(x,\mathbf y)\in\IR\times\IR^{n-1}}\sqrt{|f(x)-x|^2+\|\mathbf b\|^2}\le
\sup_{x\in\IR}(|f(x)-x|+\|\mathbf b\|)<\frac\e2+\frac\e2=\e.$$
\end{proof}

By $\HH(\IR^n)$ we denote the homeomorphism group of the Euclidean space $\IR^n$, endowed with the compact-open topology. It can be identified with the closed subgroup of the homeomorphism group of the one-point compactification $\alpha\IR^n=\IR^n\cup\{\infty\}$ of $\IR^n$. This implies that $\HH(\IR^n)$ is a Polish group.

\begin{lemma}\label{l3.4} For each tame Cantor set $C\subseteq\IR^n$ and each open dense set $U\subseteq \IR^n$ the set
$$\HH^U_C=\{h\in\HH(\IR^n):h(C)\subseteq U\}$$is open and dense in $\HH(\IR^n)$.
\end{lemma}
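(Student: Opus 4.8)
The plan is to prove openness and density separately. For \emph{openness}, I would first reduce to working in the compactification $\alpha\IR^n$. Since $C$ is compact and $U$ is open, the condition $h(C)\subseteq U$ is, for the compact-open topology, a basic-type open condition: $\{g:g(C)\subseteq U\}$ is open in the homeomorphism group of any space once $C$ is compact and $U$ is open. So openness of $\HH^U_C$ in $\HH(\IR^n)$ is essentially immediate from the definition of the compact-open topology; the only care needed is to note that the set $U$, regarded inside $\alpha\IR^n$, is still open, and that restriction from $\HH(\alpha\IR^n)$ to the closed subgroup $\HH(\IR^n)$ preserves openness of this set.

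For \emph{density}, fix an arbitrary nonempty open set $\mathcal V\subseteq\HH(\IR^n)$ and an element $g\in\mathcal V$; I must find $h\in\mathcal V$ with $h(C)\subseteq U$. Since the compact-open topology on $\HH(\IR^n)\subseteq\HH(\alpha\IR^n)$ is metrizable, there is $\e>0$ such that every homeomorphism $f$ of $\IR^n$ with $\sup_{x}\|f(x)-g(x)\|<\e$ (in an appropriate metric, e.g. the sup metric on $\alpha\IR^n$) lies in $\mathcal V$; by uniform continuity of $g$ on the compactification, it suffices to produce $h=g\circ\phi$ where $\phi$ is a homeomorphism of $\IR^n$ that is uniformly close to the identity and satisfies $\phi(C)\subseteq g^{-1}(U)$. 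The set $g^{-1}(U)$ is open and dense in $\IR^n$. Since $C$ is a \emph{tame} Cantor set, by definition it is ambiently homeomorphic to a Cantor set lying on the line $\IR\times\{0\}^{n-1}$; conjugating by that ambient homeomorphism reduces the problem to the case where $C\subseteq\IR\times\{0\}^{n-1}$ is compact and nowhere dense. Then Lemma~\ref{l3.3} applied to $g^{-1}(U)$ and $C$ produces exactly the required $\phi$ with $\phi(C)\subseteq g^{-1}(U)$ and $\sup_x\|\phi(x)-x\|$ as small as we wish; hence $h=g\circ\phi$ satisfies $h(C)=g(\phi(C))\subseteq U$ and $h\in\mathcal V$.

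The main obstacle is bookkeeping with the topology: one must be careful that ``uniformly close to the identity in $\IR^n$'' translates to ``close in the compact-open topology of $\HH(\alpha\IR^n)$'', and that conjugating $C$ back to the line does not destroy tameness — but the first is handled by uniform continuity on the compact space $\alpha\IR^n$, and the second is built into the definition of a tame Cantor set together with the fact (noted in the excerpt) that any two tame Cantor sets are ambiently homeomorphic. Once the reduction to Lemma~\ref{l3.3} is in place, the construction of $\phi$ is entirely supplied by that lemma, so no further geometric work is required.
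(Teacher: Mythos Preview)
Your proposal is correct and follows essentially the same approach as the paper: openness is immediate from the compact-open topology, and density is obtained by reducing (via tameness) to a Cantor set on the line $\IR\times\{0\}^{n-1}$ and invoking Lemma~\ref{l3.3} to push it into the given dense open set by a small perturbation. The only cosmetic differences are that the paper works directly with basic compact-open neighborhoods $\{h:\sup_{x\in K}\|h(x)-h_0(x)\|<\e\}$ rather than passing through the metric on $\alpha\IR^n$, and that the paper post-composes the perturbation (straightening $h_0(C)$ and writing $h=f^{-1}\circ g\circ f\circ h_0$) whereas you pre-compose (straightening $C$ and writing $h=g\circ\phi$); neither change affects the substance of the argument.
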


\begin{proof} The openness of the set $\HH^U_C$ in $\HH(\IR^n)$ follows from the openness of the set $U$ and the definition of the compact-open topology on the group $\HH(\IR^n)$. It remains to prove that the set $\HH^U_C$ is dense in $\HH(\IR^n)$. Given a homeomorphism $h_0\in\HH(\IR^n)$, a compact set $K\subseteq\IR^n$ and $\e>0$, we need to find a homeomorphism $h\in\HH^U_C$ such that $\sup_{x\in K}\|h(x)-h_0(x)\|<\e$.

Let $C_0=h_0(C)\subseteq\IR^n$. Because the set $C_0$ is a tame Cantor set there exists a homeomorphism $f:\IR^n\to\IR^n$ such that $f(C_0)\subseteq\IR\times\{0\}^{n-1}$. Using the uniform continuity of the map $f^{-1}$ on each compact subset of $\IR^n$, we can find $\delta>0$, such that $\|f^{-1}(x)-f^{-1}(y)\|<\e$ for any points $x\in f\circ h_0(K)$ and $y\in\IR^n$ with $\|x-y\|<\delta$. By Lemma~\ref{l3.3}, there is a homeomorphism $g\in\HH(\IR^n)$ such that $g(f(C_0))\subseteq f(U)$ and $\sup_{x\in\IR^n}\|g(x)-x\|<\delta$.

Then the homeomorphism $h=f^{-1}\circ g\circ f\circ h_0$ has the required properties:
$$h(C)=f^{-1}\circ g\circ f\circ h_0(C)=f^{-1}\circ g\circ f(C_0)\subseteq f^{-1}(f(U))=U$$ and
$$\sup_{x\in K}\|h(x)-h_0(x)\|=\sup_{y\in h_0(K)}\|h\circ h_0^{-1}(y)-y\|=\sup_{y\in h_0(K)}\|f^{-1}\circ g\circ f(y)-f^{-1}\circ f(y)\|<\e.$$
\end{proof}

\begin{lemma}\label{l3.5} For each ($\sigma$-compact) set $A\in\sigma\C_0$ and each dense $G_\delta$-set $G\subseteq\IR^n$ the set
$$\HH^G_A=\{h\in\HH(\IR^n):h(A)\subseteq G\}$$
contains (is) a dense $G_\delta$-subset of $\HH(\IR^n)$.
\end{lemma}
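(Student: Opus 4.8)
The plan is to reduce the statement to a countable intersection of the dense open sets provided by Lemma~\ref{l3.4}. First I would write $A$ as a subset of a $\sigma$-compact set: since $A\in\sigma\C_0$, there is a sequence of tame Cantor sets $C_m\subseteq\IR^n$, $m\in\w$, with $A\subseteq\bigcup_{m\in\w}C_m=:A'$. Clearly $\HH^G_{A'}\subseteq\HH^G_A$, so it suffices to prove the ``(is)'' version for the $\sigma$-compact set $A'$; this simultaneously handles both clauses of the lemma, and in fact shows that for the distinguished $\sigma$-compact representative one gets exactly a dense $G_\delta$ set.

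Next I would express the complement of $G$ as an increasing union of closed nowhere dense sets. Write $\IR^n\setminus G=\bigcup_{k\in\w}F_k$ with each $F_k$ closed and nowhere dense and $F_k\subseteq F_{k+1}$; then $U_k:=\IR^n\setminus F_k$ is open dense and $G=\bigcap_{k\in\w}U_k$. For a homeomorphism $h$ we have $h(A')\subseteq G$ if and only if $h(C_m)\subseteq U_k$ for all $m,k\in\w$, so
$$\HH^G_{A'}=\bigcap_{m\in\w}\bigcap_{k\in\w}\HH^{U_k}_{C_m}.$$
Each set $\HH^{U_k}_{C_m}$ is open and dense in $\HH(\IR^n)$ by Lemma~\ref{l3.4} (applied to the tame Cantor set $C_m$ and the open dense set $U_k$). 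Since $\HH(\IR^n)$ is a Polish group, hence a Baire space, the countable intersection on the right is a dense $G_\delta$-subset of $\HH(\IR^n)$, completing the proof.

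The only point requiring a little care — and the place I would expect the referee to look — is the elementary fact that $h(A')\subseteq G$ is equivalent to the conjunction over $m$ and $k$ of the conditions $h(C_m)\subseteq U_k$; this is immediate from $A'=\bigcup_m C_m$ and $G=\bigcap_k U_k$, together with the fact that a homeomorphism commutes with unions. There is no genuine obstacle here: the whole lemma is a Baire-category packaging of Lemma~\ref{l3.4}, and the parenthetical qualifiers in the statement are matched by the parenthetical qualifiers in the argument (``$\sigma$-compact'' $A$ giving equality rather than mere containment). One should also remark, for the ``contains'' clause with a general $A\in\sigma\C_0$, that the dense $G_\delta$ set $\HH^G_{A'}$ we produced is contained in $\HH^G_A$, which is all that is claimed.
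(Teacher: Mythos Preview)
Your approach is essentially identical to the paper's: cover $A$ by tame Cantor sets $C_m$, write $G=\bigcap_k U_k$ with $U_k$ open dense, apply Lemma~\ref{l3.4} to each pair $(C_m,U_k)$, and invoke Baire. The ``contains'' clause is fully handled.

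There is, however, a small slip in your treatment of the parenthetical ``(is)'' clause. You claim that it suffices to prove $\HH^G_{A'}$ is a dense $G_\delta$ for the particular $\sigma$-compact set $A'=\bigcup_m C_m$, and that this ``simultaneously handles both clauses''. It does not: for an \emph{arbitrary} $\sigma$-compact $A\in\sigma\C_0$ (which need not equal any union of tame Cantor sets), the inclusion $\HH^G_{A'}\subseteq\HH^G_A$ only delivers the ``contains'' conclusion. To see that $\HH^G_A$ itself \emph{is} a $G_\delta$-set one needs the separate (easy) observation that if $A=\bigcup_i K_i$ with each $K_i$ compact, then $\HH^G_A=\bigcap_{i,k}\{h:h(K_i)\subseteq U_k\}$, and each factor is open in the compact-open topology. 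This is exactly the one line the paper appends at the end of its proof; with it, your argument is complete and coincides with the paper's.
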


\begin{proof} Let $A\in\sigma\C_0$. Then $A\subseteq\bigcup_{k\in\w}C_k$ for some tame Cantor sets $C_k\subseteq \IR^n$, $k\in\w$. The set $G$ is equal to the countable intersection $G=\bigcap_{k\in\w}U_k$ of a decreasing sequence of dense open sets $U_k\subseteq\IR^n$, $k\in\w$. It follows from Lemma~\ref{l3.4} that for any $i,j\in\w$ the set $\HH^{U_i}_{C_j}$ is a dense open set in the homeomorphism group $\HH(\IR^n)$. Then the intersection $\bigcap_{i,j\in\w}\HH^{U_i}_{C_j}\subseteq \HH^G_A$ is a dense $G_\delta$-subset of $\HH(\IR^n)$ contained in $\HH^G_A$.

If the set $A$ is $\sigma$-compact, then  $\HH_A^G$ is a $G_\delta$-set  in $\HH(\IR^n)$.
%This set is dense as it contains a dense $G_\delta$-set in $\HH(\IR^n)$.
\end{proof}

\section{Some known facts about cardinal characteristics of ideals}

In the proof of Theorem~\ref{main} we shall simultaneously work with $\sigma$-ideals on various topological spaces. To distinguish between such $\sigma$-ideals we shall use the following notations.

For a perfect Polish space $X$ by $\M(X)$ we denote the $\sigma$-ideal of all meager subsets of $X$, i.e. subsets of countable unions of closed sets with empty interior. It is well-known that the cardinal characteristics of the $\sigma$-ideal $\M(X)$ do not depend on a space $X$.

\begin{proposition}\label{equal}
If $X$ is a perfect Polish space, then
$$
\add(\M(X))=\add(\M(\w^\w)),\quad \cov(\M(X))=\cov(\M(\w^\w)),
$$
$$
\non(\M(X))=\non(\M(\w^\w)),\quad\cof(\M(X))=\cof(\M(\w^\w)).
$$
\end{proposition}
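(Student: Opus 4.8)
The plan is to reduce the general case to a single model space via the fact that any two perfect Polish spaces are ``Borel isomorphic modulo meager sets'', more precisely that they contain dense $G_\delta$-subsets which are homeomorphic, together with the standard fact that these cardinal characteristics of $\M$ depend only on the homeomorphism type of a comeager subset. Concretely, the Baire space $\w^\w$ is homeomorphic to the set of irrationals, which embeds in any perfect Polish space as a dense $G_\delta$-subset; conversely, by the Baire category theorem a perfect Polish space $X$ is not meager in itself, so one wants to relate $\M(X)$ to $\M(\w^\w)$ through such an embedding.

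\smallskip
First I would prove the elementary transfer principle: if $Y$ is a dense $G_\delta$-subset of a perfect Polish space $X$, then the trace map $A\mapsto A\cap Y$ and the inclusion map $B\mapsto B$ induce, between $\M(X)$ and $\M(Y)$, maps that preserve the relevant order structure. Indeed, $A\in\M(X)$ implies $A\cap Y\in\M(Y)$ (a meager set meets any subspace in a meager set), and conversely $B\in\M(Y)$ implies $B\in\M(X)$ since $B$ is then contained in a meager $F_\sigma$-set of $Y$, whose closure in $X$ is still meager (here one uses that $Y$ is dense, so nowhere-dense-in-$Y$ gives nowhere-dense-in-$X$, and that $X\setminus Y$ is itself meager in $X$). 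These two facts, applied to the four characteristics, show $\add(\M(X))=\add(\M(Y))$, $\cov(\M(X))=\cov(\M(Y))$, $\non(\M(X))=\non(\M(Y))$, and $\cof(\M(X))=\cof(\M(Y))$: for $\cov$ and $\add$ one pushes families from $X$ down to $Y$ and back; for $\non$ one uses that $X\setminus Y$ is meager so a non-meager subset of $X$ of minimal size can be taken inside $Y$, and a non-meager subset of $Y$ is non-meager in $X$; for $\cof$ one uses that a cofinal family of meager $F_\sigma$-sets in $Y$ yields a cofinal family in $X$ by taking closures and adjoining the fixed meager set $X\setminus Y$, and vice versa by tracing.

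\smallskip
Second, I would invoke the classical topological fact that every perfect Polish space contains a dense $G_\delta$-subset homeomorphic to $\w^\w$: the set of irrationals in a countable dense subset's complement works for $\IR$, and for a general perfect Polish space $X$ one fixes a countable dense set $Q\subseteq X$ and checks that $X\setminus Q$ is a dense $G_\delta$-subset which is a zero-dimensional Polish space with no compact neighbourhoods after a further refinement, hence homeomorphic to $\w^\w$ by the Alexandrov--Urysohn characterization of the Baire space. Applying the transfer principle of the previous paragraph with $Y$ this copy of $\w^\w$, and then again with $X$ replaced by $\w^\w$ and $Y$ the same copy, yields $\add(\M(X))=\add(\M(\w^\w))$ and likewise for the other three characteristics, which is exactly the claim.

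\smallskip
The main obstacle is the $\cof$ (and dually $\non$) bookkeeping in the transfer principle: one must be careful that tracing a cofinal family in $\M(X)$ to $Y$ really stays cofinal in $\M(Y)$, and that the reverse operation of taking closures does not lose cofinality — this works precisely because $X\setminus Y$ is itself a fixed meager set that can be thrown into every member of the family, so that an arbitrary meager $B\subseteq X$ is covered by (closure of its trace on $Y$) $\cup\,(X\setminus Y)$. The verification that $X\setminus Q$, suitably arranged, is homeomorphic to $\w^\w$ is standard but should be cited rather than reproved; I would reference the Alexandrov--Urysohn theorem (e.g. \cite[7.7]{Ke}) for this.
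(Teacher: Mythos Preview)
Your approach is essentially the paper's: it too just invokes the existence of an embedding $\theta:\w^\w\to X$ with dense $G_\delta$ image (citing Kuratowski) and leaves the transfer of the four invariants implicit, which you spell out correctly. One caution: for a general perfect Polish $X$ the set $X\setminus Q$ is \emph{not} zero-dimensional (e.g.\ $\IR^2\setminus\IQ^2$ is connected), so the dense $G_\delta$ copy of $\w^\w$ really does require the cited black-box result rather than the heuristic you sketch.
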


\begin{proof}
There is an embedding $\theta :\w^\w\to X$ whose image $\theta(\w^\w)$ is a dense $G_\delta$-subset of $X$ (it can be proved elementarily by a direct construction or follows from Theorem 2, sec. 36, IV and Theorem 3, sec. 36, II in \cite{kuratowski}). This gives the desired equalities.
\end{proof}

The above proposition justifies why we often use the symbol $\mathcal{M}$ without mentioning a specific space $X$.

For a topological space $X$ by $\sigma\K(X)$ we denote the $\sigma$-ideal generated by compact subsets of $X$ and put $\sigma\K=\sigma\K(\w^\w)$. It is known that
$$
\add(\sigma\K)=\non(\sigma\K)=\mathfrak b\;\;\mbox{ and }\;\;\cov(\sigma\K)=\cof(\sigma\K)=\mathfrak d,
$$
where $\mathfrak b$ (resp. $\mathfrak d$) is defined as the smallest cardinality $|B|$ of a subset $B\subset \w^\w$ which is {\em unbounded} (resp. {\em dominating}) in $\w^\w$ in the sense that for each $f\in\w^\w$ there is $g\in B$ such that $f\not\le^* g$ (resp. $f\le^* g$). Here for two functions $f,g\in\w^\w$ we write $f\le^* g$ if the set $\{n\in\w:f(n)>g(n)\}$ is finite.

We will use the following well known equalities (see e.g. \cite{bartoszynskijudah}).

\begin{lemma}\label{add-cof} For the ideals $\sigma\K$ and $\M$ on the Baire space $X=\w^\w$ the following equalities hold:
\begin{enumerate}
\item  $\add(\M)=\min\{\mathfrak b,\cov(\M)\}$ \textup{(Truss, Miller);}
\item $\cof(\M)=\max\{\mathfrak d,\non(\M)\}$ \textup{(Fremlin);}
\item $\cof(\sigma\K,\M)=\cof(\sigma\K)=\mathfrak d$ \textup{(Bartoszy\'nski)}.
\end{enumerate}
\end{lemma}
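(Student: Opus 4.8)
All three equalities are classical, with the stated attributions, and complete proofs are in \cite{bartoszynskijudah}; the plan is to recall how the arguments go and to reproduce only the short one, (3). Both ideals here live on the Baire space $X=\w^\w$, and the key tool is Bartoszy\'nski's combinatorial description of $\M(\w^\w)$: a set $A\subseteq\w^\w$ is meager if and only if there exist an interval partition $\w=\bigsqcup_{n\in\w}I_n$ with $I_0<I_1<\cdots$ and a sequence $s=(s_n)_{n\in\w}$, $s_n\in\w^{I_n}$, such that $x\restriction I_n\ne s_n$ for all but finitely many $n$, for every $x\in A$; call $(I_n,s_n)_n$ a \emph{code} for $A$. (Only the implication ``meager $\Rightarrow$ has a code'' is used below.) This description expresses the four cardinals of $\M$ through the single combinatorial relation ``$(I_n,s_n)_n$ is absorbed by $(J_m,t_m)_m$'', in which $\mathfrak b$ and $\mathfrak d$ control the interval partitions while $\cov(\M)$ and $\non(\M)$ control the sequences $s$; that is the source of the two right-hand sides in (1) and (2).

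For (1), the bounds $\add(\M)\le\cov(\M)$ (valid for every non-trivial ideal) and $\add(\M)\le\mathfrak b$ (a line of Cicho\'n's diagram) give $\add(\M)\le\min\{\mathfrak b,\cov(\M)\}$. For the reverse inequality, given $\kappa<\min\{\mathfrak b,\cov(\M)\}$ and meager sets $A_\alpha$ ($\alpha<\kappa$) with codes $(\pi_\alpha,s_\alpha)$, I would first use $\kappa<\mathfrak b$ to fix one interval partition $\rho$ coarse enough that, for each $\alpha$, all but finitely many $\rho$-blocks absorb blocks of $\pi_\alpha$; reblocking along $\rho$ rewrites each $A_\alpha$ as a meager set supported on the $\rho$-blocks, and then $\kappa<\cov(\M)$ is used to pick, simultaneously for all $\alpha$, a single sequence indexed by the $\rho$-blocks that each reblocked $s_\alpha$ matches infinitely often; this sequence codes a meager superset of $\bigcup_\alpha A_\alpha$, so $\add(\M)\ge\min\{\mathfrak b,\cov(\M)\}$. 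Part (2) is the dual statement (Fremlin): $\non(\M)\le\cof(\M)$ (a general fact about ideals) and $\mathfrak d\le\cof(\M)$ (a Cicho\'n inequality, also a byproduct of (3) below, since $\cof(\sigma\K,\M)\le\cof(\M)$) give one direction, and for the other one assembles a cofinal family of codes from a dominating family of interval partitions (of size $\mathfrak d$) together with a non-meager set of sequences (of size $\non(\M)$). In both cases the reblocking bookkeeping is the one nontrivial point, and for it I would simply cite \cite{bartoszynskijudah}.

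It remains to prove (3), which is short. Since $\w^\w$ is nowhere locally compact, every compact subset has empty interior, hence $\sigma\K\subseteq\M$ on $\w^\w$; so any $\subseteq$-cofinal family in $\sigma\K$ is a family of meager sets and witnesses $\cof(\sigma\K,\M)\le\cof(\sigma\K)=\mathfrak d$. For the reverse inequality, let $\kappa<\mathfrak d$ and let $M_\alpha$ ($\alpha<\kappa$) be meager; I claim there is a compact $K\subseteq\w^\w$ with $K\not\subseteq M_\alpha$ for every $\alpha$. Fix a code $(\pi_\alpha,s_\alpha)$ for $M_\alpha$ with $\pi_\alpha=(I^\alpha_n)_n$, $I^\alpha_n=[e^\alpha(n-1),e^\alpha(n))$ (so $I^\alpha_0=[0,e^\alpha(0))$ and, the blocks being nonempty and consecutive, $e^\alpha(n)\ge n+1$). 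Put
$$\phi_\alpha(k)=\max\bigl(\{e^\alpha(k)\}\cup\{s^\alpha_j(i):j\le k,\ i\in I^\alpha_j\}\bigr),$$
an increasing member of $\w^\w$. As $\kappa<\mathfrak d$, the family $\{\phi_\alpha:\alpha<\kappa\}$ is not dominating, so there is an increasing $\psi\in\w^\w$ with $\psi(k)>\phi_\alpha(k)$ for infinitely many $k$, for every $\alpha$; let $K=\prod_{k\in\w}\{0,\dots,\psi(k)\}$, a compact subset of $\w^\w$. Fix $\alpha$. Whenever $\psi(k)>\phi_\alpha(k)$ and $i\in I^\alpha_k$, we have $i\ge k$ (trivially if $k=0$, and $i\ge e^\alpha(k-1)\ge k$ if $k\ge1$), hence $\psi(i)\ge\psi(k)>\phi_\alpha(k)\ge s^\alpha_k(i)$; so the point $x\in\w^\w$ defined by $x\restriction I^\alpha_k=s^\alpha_k$ for every such $k$ and $x(i)=0$ otherwise lies in $K$ and satisfies $x\restriction I^\alpha_n=s^\alpha_n$ for infinitely many $n$, whence $x\notin M_\alpha$ and $K\not\subseteq M_\alpha$. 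Thus no family of fewer than $\mathfrak d$ meager sets is $\subseteq$-cofinal over $\sigma\K$, and $\cof(\sigma\K,\M)=\mathfrak d=\cof(\sigma\K)$.

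Modulo Bartoszy\'nski's code characterization, (3) is immediate; the only genuinely nonroutine work is in (1)--(2), where one must build a single interval partition that simultaneously absorbs a small given family of partitions (for (1)), and dually produce a family of partitions that coarsens every partition (for (2)), without spoiling the reblocked codes. I expect this reblocking/fusion step to be the main obstacle, and I would defer its details to \cite{bartoszynskijudah}.
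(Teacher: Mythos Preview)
Your proposal is correct. The paper itself gives no proof of this lemma at all: it simply introduces the statement with ``We will use the following well known equalities (see e.g.\ \cite{bartoszynskijudah})'' and moves on, so your treatment---citing the same source for (1)--(2) and supplying a short direct argument for (3) via Bartoszy\'nski's code characterization of meager sets---already exceeds what the paper provides. Your proof of (3) is sound (the key points being that $\psi$ can be taken increasing and that $i\in I^\alpha_k$ forces $i\ge k$, so the constructed $x$ lands in $K$), and the sketches for (1)--(2) accurately describe the standard reblocking arguments.
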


For a Euclidean space $X=\IR^n$ by $\sigma\C_0(\IR^n)$ we shall denote the $\sigma$-ideal generated by tame Cantor sets in $\IR^n$. Since each Cantor set in $\IR$ is tame, we get $\sigma\C_0(\IR)=\M(\IR)$.

Lemma~\ref{l3.1} and the fact that each meager set in $\IR$ is contained in a union of countably many Cantor sets imply the following lemma.

\begin{lemma}\label{l4.3} For any meager subsets $A_1,\dots,A_n\subseteq\IR$ the product $\prod_{k=1}^nA_k$ belongs to the ideal $\sigma\C_0(\IR^n)$.
\end{lemma}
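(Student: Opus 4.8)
The plan is to deduce Lemma~\ref{l4.3} directly from Lemma~\ref{l3.1} together with the observation that every meager subset of $\IR$ is covered by countably many Cantor sets, exactly as the paper suggests. First I would fix meager sets $A_1,\dots,A_n\subseteq\IR$. For each $k\le n$, since $A_k$ is meager in $\IR$, it is contained in a countable union $\bigcup_{j\in\w}F_{k,j}$ of closed nowhere dense sets $F_{k,j}\subseteq\IR$. Each such $F_{k,j}$ is compact nowhere dense (after intersecting with a bounded interval and re-indexing, since a closed nowhere dense set in $\IR$ is a countable union of compact nowhere dense sets), hence is contained in a Cantor set $C_{k,j}\subseteq\IR$: a compact nowhere dense subset of $\IR$ embeds into a Cantor set because one can take a Cantor set disjoint-except-at-endpoints fattening of it, or more simply note that any compact nowhere dense subset of $\IR$ is contained in some homeomorphic copy of $\{0,1\}^\w$ obtained by adding a suitable countable set of ``dividing points''. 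Thus $A_k\subseteq\bigcup_{j\in\w}C_{k,j}$ with each $C_{k,j}$ a Cantor set in $\IR$.

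The second step is to expand the product. We have
$$\prod_{k=1}^n A_k\;\subseteq\;\prod_{k=1}^n\Big(\bigcup_{j\in\w}C_{k,j}\Big)\;=\;\bigcup_{(j_1,\dots,j_n)\in\w^n}\;\prod_{k=1}^n C_{k,j_k},$$
which is a countable union, since $\w^n$ is countable. By Lemma~\ref{l3.1}, each set $\prod_{k=1}^n C_{k,j_k}$ is a tame Cantor set in $\IR^n$. Therefore $\prod_{k=1}^n A_k$ is contained in a countable union of tame Cantor sets, i.e. it belongs to the $\sigma$-ideal $\sigma\C_0(\IR^n)$ generated by tame Cantor sets. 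This completes the argument.

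The only genuinely non-routine point, and the one I would state carefully, is the claim that every meager subset of $\IR$ lies in a countable union of Cantor sets — equivalently, that every compact nowhere dense set $F\subseteq\IR$ is contained in a Cantor set. I would justify it by the following construction: $\IR\setminus F$ is a nonempty open set; inside each bounded component of $\IR\setminus F$ (and inside sufficiently many bounded subintervals of the unbounded components so as to leave no interval of $F^{\,\prime}$ uncut) pick a point, obtaining a countable set $D$; then one checks that a suitable compact set containing $F$, with no isolated points and totally disconnected, can be manufactured — in fact it suffices to take $F$ together with finitely many extra points per gap to witness that the resulting compact set is perfect and zero-dimensional, hence a Cantor set by Brouwer's characterization cited in the excerpt. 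Alternatively, and most cleanly, one invokes the standard fact that any compact nowhere dense subset of $\IR$ embeds in the middle-thirds Cantor set via an ambient homeomorphism of a bounded interval, so $F$ is itself contained in a Cantor set. All other steps are purely formal, using only that a finite product of countable families has countable index set and that $\sigma\C_0(\IR^n)$ is closed under countable unions and subsets.
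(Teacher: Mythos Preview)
Your argument is correct and follows essentially the same route as the paper: the paper simply states that Lemma~\ref{l4.3} follows from Lemma~\ref{l3.1} together with the fact that every meager subset of $\IR$ is contained in a countable union of Cantor sets, and you have spelled out precisely this deduction. Your extra discussion of why a compact nowhere dense subset of $\IR$ lies inside a Cantor set is more detail than the paper provides, but it is sound and does not deviate from the intended approach.
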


This lemma yields another lemma.

\begin{lemma}\label{l4.4} For any zero-dimensional subspace $Z\subseteq\IR$ we get $\sigma\K(Z^n)\subseteq \sigma\C_0(\IR^n)$.
\end{lemma}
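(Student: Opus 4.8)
The plan is to reduce the statement to Lemma~\ref{l4.3} via the standard fact that a $\sigma$-compact subset of a zero-dimensional separable metric space is, up to homeomorphism, a very concrete object. First I would recall that if $Z\subseteq\IR$ is zero-dimensional, then $Z^n\subseteq\IR^n$ is zero-dimensional as well, and any compact subset $K\subseteq Z^n$ is a compact zero-dimensional metrizable space, hence embeds in the Cantor cube $\{0,1\}^\omega$; but more to the point, $K$ sits inside $Z^n$. Since a $\sigma$-compact set is a countable union of compact sets, and $\sigma\C_0(\IR^n)$ is a $\sigma$-ideal, it suffices to show that each compact subset $K\subseteq Z^n$ belongs to $\sigma\C_0(\IR^n)$.

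Next I would handle a single compact $K\subseteq Z^n$. Write $\pi_k:\IR^n\to\IR$ for the $k$-th coordinate projection. Each $\pi_k(K)$ is a compact subset of $Z$, hence a compact, zero-dimensional, hence nowhere dense, subset of $\IR$ (it is nowhere dense because it is contained in the zero-dimensional, and therefore meager, set $Z$, or directly because a compact subset of $\IR$ with nonempty interior is not zero-dimensional). In particular each $\pi_k(K)$ is a compact nowhere dense, so meager, subset $A_k\subseteq\IR$. Then $K\subseteq\prod_{k=1}^n\pi_k(K)=\prod_{k=1}^n A_k$, and by Lemma~\ref{l4.3} the product $\prod_{k=1}^n A_k$ belongs to $\sigma\C_0(\IR^n)$. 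Since $\sigma\C_0(\IR^n)$ is hereditary, $K\in\sigma\C_0(\IR^n)$, and then the countable union giving any $A\in\sigma\K(Z^n)$ also lies in $\sigma\C_0(\IR^n)$.

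There is really no serious obstacle here: the only point requiring a moment's care is verifying that compact subsets of $Z$ are nowhere dense in $\IR$ (equivalently meager), so that Lemma~\ref{l4.3} applies to $A_k=\pi_k(K)$. This follows because a zero-dimensional subspace of $\IR$ contains no interval, so any subset of $Z$ has empty interior in $\IR$; for a \emph{compact} subset this is the same as being nowhere dense. The rest is bookkeeping: passing from compact sets to $\sigma$-compact sets using that $\sigma\C_0(\IR^n)$ is a $\sigma$-ideal, and using heredity to pass from the bounding product to $K$ itself. I would also remark in passing that this is exactly the form in which the lemma will be used later, namely to compare $\sigma\K$ on a zero-dimensional model space with $\sigma\C_0$ on $\IR^n$ and thereby transfer the cardinal characteristics $\mathfrak b$ and $\mathfrak d$.
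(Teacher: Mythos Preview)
Your argument is correct and is exactly the route the paper intends: reduce to Lemma~\ref{l4.3} by bounding a compact $K\subseteq Z^n$ inside the product of its coordinate projections, each of which is compact and nowhere dense in $\IR$. One small slip: the parenthetical ``contained in the zero-dimensional, and therefore meager, set $Z$'' is false in general (take $Z=\IR\setminus\IQ$, which is zero-dimensional but comeager), so you should rely solely on your second justification, that a compact zero-dimensional subset of $\IR$ has empty interior and is hence nowhere dense.
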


\section{Proof of Theorem~\ref{main}}\label{s:main}

Let $\M$ be the ideal of meager subsets of $\IR^n$ and $\sigma\C_0$ be the $\sigma$-ideal generated by the tame Cantor sets in $\IR^n$.
The proof of Theorem~\ref{main} is divided into four parts corresponding to the equalities (1)--(4) of Theorem \ref{main}.

\vspace{0.2 cm}

\noindent (1) $\cov(\sigma\C_0)=\cov(\M)$.

\vspace{0.2 cm}

The inequality $\cov(\M)\le\cov(\sigma\C_0)$ follows from the (trivial) inclusion $\sigma\C_0\subseteq\M$.

We will prove $\cov(\M)\ge\cov(\sigma\C_0)$. By the definition of $\cov(\M(\IR))=\cov(\M)$ there exists a cover $\U\subseteq\M(\IR)$ of the real line $\IR$ such that $|\U|=\cov(\M)$. The cover $$\U^n=\Big\{\prod_{k=1}^nC_k:C_1,\dots,C_n\in\U\Big\}$$ of $\IR^n$ has cardinality $|\U|^n=\cov(\M)$ and by Lemma~\ref{l4.3} is contained in the ideal $\sigma\C_0$, whence $\cov(\sigma\C_0)\le|\U^n|=\cov(\M)$.

\vspace{0.2 cm}

\noindent (2) $\non(\sigma\C_0)=\non(\M)$.

\vspace{0.2 cm}

The inequality $\non(\sigma\C_0)\le\non(\M)$ trivially follows from the inclusion $\sigma\C_0\subseteq\M$.

We will prove the inequality $\non(\sigma\C_0)\ge\non(\M)$. Let $A\subseteq\IR^n$ and $|A|<\non(\M)$.
It follows that $A\subseteq B^n$ for some set $B\subseteq\IR$ of cardinality $|B|\le n\cdot |A|<\non(\M)=\non(\M(\IR))$. Hence $B$ is meager. By Lemma~\ref{l4.3}, the power $B^n$ belongs to the ideal $\sigma\C_0$.

\vspace{0.2 cm}

\noindent (3) $\add(\sigma\C_0)=\add(\sigma\C_0,\M)=\add(\M)$.

\vspace{0.2 cm}

Since $\add(\sigma\C_0)\le\add(\sigma\C_0,\M)$, it suffices to prove two inequalities $\add(\sigma\C_0,\M)\le\add(\M)$ and $\add(\M)\le\add(\sigma\C_0)$.

First we prove the inequality $\add(\sigma\C_0,\M)\le\add(\M)$. Let $\A\subseteq \M(\IR)$  be a subfamily of cardinality $|\A|=\add(\M(\IR))=\add(\M)$ whose union $\bigcup\A$ is not meager in $\IR$. It follows that the family $\A^n=\{\prod_{k=1}^nA_k:A_1,\dots,A_n\in\A\}$ has cardinality $|\A|^n=\add(\M)$ and, by Lemma~\ref{l4.3}, is contained in the ideal $\sigma\C_0(\IR^n)=\sigma\C_0$. The Kuratowski-Ulam Theorem~\cite[8.41]{Ke} implies that the union $\bigcup\A^n=\left(\bigcup\A\right)^n$ is not meager in $\IR^n$. Hence $\add(\sigma\C_0,\M)\le|\A^n|=\add(\M)$.
\smallskip

The inequality $\add(\M)\le\add(\sigma\C_0)$ will follow if we show that for each family $\A$ containing less than $\add(\M)$ tame Cantor sets in $\IR^n$ the union $\bigcup\A$ belongs to the ideal $\sigma\C_0$. Let $G=(\IR\setminus\IQ)^n$. Of course, $G$ is a dense $G_\delta$-subset of $\IR^n$. By Lemma~\ref{l3.5}, for each tame Cantor set $A\in\A$ the set $\HH^G_A=\{h\in\HH(\IR^n):h(A)\subseteq G\}$ is a dense $G_\delta$-set in the homeomorphism group $\HH(\IR^n)$. Since $|\A|<\add(\M)\le\cov(\M)=\cov(\M(\HH(\IR^n)))$, the intersection $\bigcap_{A\in\A}\HH^G_A$ is not empty.
Let $h\in\bigcap_{A\in\A}\HH^G_A$. It follows that $h(\A)=\{h(A):A\in\A\}$ is a family of less than $\add(\M)$ many compact subsets of the space $G=(\IR\setminus \IQ)^n$, which is homeomorphic to the Baire space $\w^\w$. Since $|h(\A)|<\add(\M)\le \mathfrak b=\add(\sigma\K(G))$, there is a $\sigma$-compact subset $K\subseteq G$ containing the union $\bigcup_{A\in\A}h(A)$. Lemma~\ref{l4.4} guarantees that $K\in \sigma\C_0(\IR^n)$. Thus $h^{-1}(K)$ belongs to the ideal $\sigma\C_0$ and $\bigcup\A\subseteq h^{-1}(K)$.

\vspace{0.2 cm}

\noindent (4) $\cof(\sigma\C_0)=\cof(\sigma\C_0,\M)=\cof(\M)$.

\vspace{0.2 cm}

Since $\cof(\sigma\C_0,\M)\le\cof(\sigma\C_0)$ and $\cof(\M)=\max\{\non(\M),\mathfrak d\}$, it suffices to prove two inequalities:\newline $\max\{\non(\M),\mathfrak d\}\le\cof(\sigma\C_0,\M)$ and $\cof(\sigma\C_0)\le\max\{\non(\M),\mathfrak d\}$.

First we will prove the inequality $\max\{\non(\M),\mathfrak d\}\le\cof(\sigma\C_0,\M)$. In fact, we shall prove separately the inequalities
$\non(\M)\le \cof(\sigma\C_0,\M)$ and  $\mathfrak{d}\le \cof(\sigma\C_0,\M)$.

Let us prove the inequality $\non(\M)\le \cof(\sigma\C_0,\M)$. Let $\A\subseteq\M$ be a family of cardinality $|\A|=\cof(\sigma\C_0,\M)$ such that each set $C\in\sigma\C_0$ is contained in some set $A\in \A$. For each set $A\in\A$ we choose a point $x_A\in X\setminus A$. It follows that the set $B=\{x_A:A\in\A\}$ does not belong to the ideal $\sigma\C_0$. Hence $\non(\M)=\non(\sigma\C_0)\le|B|\le|\A|=\cof(\sigma\C_0,\M)$.

Now let us prove the inequality $\mathfrak{d}\le \cof(\sigma\C_0,\M)$.
Let $G=(\IR\setminus\IQ)^n$. The set $G$ is homeomorphic to the Baire space $\w^\w$.
By Lemma~\ref{add-cof}(3), $\cof(\sigma\K(G),\M(G))=\mathfrak d$.
By Lemma~\ref{l4.4}, $\sigma\K(G)\subseteq\sigma\C_0(\IR^n)$. Moreover, $\M(G)=\{G\cap M:M\in\M(\IR^n)\}$. Now we see that $\mathfrak d=\cof(\sigma\K(G),\M(G))\le\cof(\sigma\C_0,\M)$ and the proof of the inequality $\mathfrak{d}\le \cof(\sigma\C_0,\M)$ is completed.

Finally, we will prove the inequality $\cof(\sigma\C_0)\le \cof(\M)=\max\{\non(\M),\mathfrak d\}$. Since $\cof(\sigma\K(G))=\mathfrak d$, the ideal $\sigma\K(G)$ has a base $\mathcal D\subseteq\sigma\K(G)$ of cardinality $|\mathcal D|=\mathfrak d$.
By Lemma~\ref{l4.4} we have $\mathcal D\subseteq\sigma\C_0(\IR^n)$.

In the homeomorphism group $\HH(\IR^n)$ fix any non-meager subset $H$ of
cardinality $|H|=\non(\M(\HH(\IR^n)))=\non(\M)$. It is clear that the family
$\C=\{h^{-1}(D):h\in H,\;D\in\mathcal D\}$ has cardinality $|\C|\le|H\times\mathcal D|\le\max\{\non(\M),\mathfrak d\}=\cof(\M)$ and that $\C\subseteq \sigma\C_0$. We will complete the proof if we show that the family $\C$ is a base for the family $\sigma\C_0$. Let $A\in\sigma\C_0$. Without loss of generality we can assume that $A$ is $\sigma$-compact. By Lemma~\ref{l3.5}, the set $\HH^G_A=\{h\in\HH(\IR^n):h(A)\subseteq G\}$ is a dense $G_\delta$-set in $\HH(\IR^n)$ and hence it meets the non-meager set $H$. Consequently, there is a homeomorphism $h\in H$ such that $h(A)\subseteq G$. Because $\mathcal D$ is a base for $\sigma\K(G)$, the $\sigma$-compact set $h(A)$ is contained in some $\sigma$-compact set $D\in\mathcal D$. Then $A\subseteq h^{-1}(D)\in \C$ and the proof is complete. \hfill $\Box$

\section{Proof of Theorem~\ref{tD}}\label{s:tD}

For the proof of Theorem~\ref{tD} we shall use a deep result of Geoghegan and Summerhill \cite{GS} on the existence of $k$-dimensional pseudoboundaries in Euclidean spaces.

A subset $\Sigma\subset\IR^n$ is defined to be a {\em $\mathcal B$-pseudoboundary} for a topologically invariant family $\mathcal B$ of closed subsets of $\IR^n$ if
\begin{itemize}
\item $\Sigma=\bigcup_{n\in\w}B_n$ for some sets $B_n\in\mathcal B$, $n\in\w$;
\item for each $B\in\mathcal B$, an open set $U\subset\IR^n$ and an open cover $\V$ of $U$ there is a homeomorphism $h:U\to U$ such that $h(U\cap B)\subset \Sigma$ and for each point $x\in U$ the doubleton $\{x,h(x)\}$ is contained in some set $V\in \V$.
\end{itemize}

Geoghegan and Summerhill \cite{GS} for every numbers $0\le k<n$ constructed a $\Z_{n-k-1}^*$-pseudoboundary $\Sigma$ for the family $\Z^*_{n-k-1}$ of strong $Z_{n-k-1}$-sets in $\IR^n$.

A closed subset $A\subset\IR^n$ is called a {\em strong $Z_k$-set in $\IR^n$} if for any compact subpolyhedron $P\subset\IR^n$ of dimension $\dim(P)\le k$ and any $\e>0$ there is an $\e$-isotopy $(h_t)_{t\in[0,1]}:\IR^n\to\IR^n$ such that $h_0$ is the identity homeomorphism of $\IR^n$, $h_1(A)\cap P=\emptyset$, and $h_t(x)=x$ for any $t\in[0,1]$ and any point $x\in \IR^n$ on distance $\mathrm{dist}(x,A\cap P)\ge \e$ from $A\cap P$. The following lemma was proved in \cite{GS},  Proposition 3.1(4). In this lemma $\Z_2$ and $\Z^*_{n-k-1}$ stand for the families of $Z_2$-sets and strong $Z_{n-k-1}$-sets in $\IR^n$, respectively.

\begin{lemma}\label{l6.1} For every $n\ge 5$ and $0\le k\le n-3$ we have the inclusions
$$\oD_k\cap\Z_2\subset \Z^*_{n-k-1}\subset\oD_k.$$
\end{lemma}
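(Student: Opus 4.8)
The plan is to treat the two inclusions in the chain $\oD_k\cap\Z_2\subset\Z^*_{n-k-1}\subset\oD_k$ separately, since they are proved by completely different means: the second is soft dimension theory, while the first is the hard PL‑topology content.

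\textbf{For $\Z^*_{n-k-1}\subset\oD_k$} I would argue as follows. Let $A$ be a strong $Z_{n-k-1}$-set; since $\dim$ of a countable union of closed sets equals the supremum of their dimensions (the countable sum theorem, \cite{En2}), it suffices to bound $\dim(A\cap B)$ for every closed ball $B\subset\IR^n$. Given $\delta>0$, choose a triangulation $T$ of a neighbourhood of $B$ of mesh $<\delta$ and let $P$ be its (compact) $(n-k-1)$-skeleton over $B$. By the strong $Z_{n-k-1}$-property there is a homeomorphism $h$ of $\IR^n$ that is $\delta$-close to the identity and satisfies $h(A)\cap P=\emptyset$. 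Now the complement of the $(n-k-1)$-skeleton of $T$ deformation retracts onto the dual $k$-skeleton, and therefore carries an open cover of order $\le k+1$ and mesh $O(\delta)$; pulling this cover back by $h$ produces an open cover of $A\cap B$ of order $\le k+1$ and mesh $O(\delta)$. Letting $\delta\to0$ gives $\dim(A\cap B)\le k$, hence $\dim A\le k$.

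\textbf{For $\oD_k\cap\Z_2\subset\Z^*_{n-k-1}$}, let $A$ be closed with $\dim A\le k$ and a $Z_2$-set, fix a compact polyhedron $P$ with $\dim P\le n-k-1$, and fix $\e>0$; I must produce an $\e$-isotopy pushing $A$ off $P$ with support inside the $\e$-neighbourhood of $A\cap P$. The strategy is: (i) use the $Z_2$-hypothesis to see that the complement $\IR^n\setminus A$ is locally simply connected at $A$ (equivalently, $A$ is $1$-LCC); (ii) invoke the taming theory for closed sets of dimension $\le k$ in codimension $n-k\ge3$ (legitimate since $k\le n-3$ and $n\ge5$) to upgrade this to the connectivity of $\IR^n\setminus A$ needed for engulfing $(n-k-1)$-dimensional polyhedra — equivalently, to the statement that $A$ is $k$-LCC and PL-tame; (iii) apply PL (radial) engulfing inside a small regular neighbourhood of $A\cap P$ to engulf $P$ into $\IR^n\setminus A$, the reverse motion being the required small, locally supported ambient isotopy carrying $A$ off $P$. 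Steps (i)--(iii) are essentially Proposition 3.1 of \cite{GS} together with the engulfing and taming results on which it rests.

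\textbf{Main obstacle.} The covering argument for the first inclusion is routine. The real difficulty is concentrated in steps (ii)--(iii) of the second inclusion: arranging the engulfing so that it both achieves the disjunction and is realized by an ambient isotopy that is $\e$-small and fixed away from $A\cap P$. This is precisely what forces the hypotheses $n\ge5$ and $k\le n-3$ (so that $A$ has codimension $\ge3$ and one works in the stable PL range), and it is the step where I would invoke \cite{GS} rather than reprove the engulfing from scratch.
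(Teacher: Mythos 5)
The paper offers no proof of this lemma: it is quoted verbatim from Geoghegan--Summerhill, Proposition 3.1(4) (the sentence preceding the statement says exactly this), so there is no in-paper argument to compare yours against step by step. Your reconstruction is sound and consistent with how the result is actually established there: the inclusion $\Z^*_{n-k-1}\subset\oD_k$ is indeed the routine argument you give --- push $A$ off the $(n-k-1)$-skeleton of a mesh-$\delta$ triangulation by a $\delta$-small homeomorphism, retract the complement of that skeleton onto the dual $k$-skeleton to get an $O(\delta)$-map of $h(A\cap B)$ into a $k$-dimensional polyhedron (equivalently the order-$(k+1)$ cover), conclude $\dim(A\cap B)\le k$ by topological invariance of dimension, and globalize by the countable sum theorem --- while the inclusion $\oD_k\cap\Z_2\subset\Z^*_{n-k-1}$ is exactly the $1$-LCC/taming-plus-engulfing package in codimension $\ge 3$ and ambient dimension $\ge 5$ that you correctly attribute to \cite{GS} rather than reprove. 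Since the paper itself treats the lemma as an imported black box, delegating steps (ii)--(iii) to \cite{GS} is entirely in keeping with its approach.
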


We shall use the following non-trivial result of Geoghegan and Summerhill \cite[3.12]{GS}.

\begin{lemma}\label{l6.2} For every $n\in\IN$ and $0\le k<n$ there is a $\Z^*_{n-k-1}$-pseudoboundary $\Sigma\subset\IR^n$.
\end{lemma}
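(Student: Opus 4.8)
\textbf{Plan of proof for Lemma~\ref{l6.2}.}
The statement is exactly \cite[3.12]{GS}, so I do not intend to reprove it from scratch; rather, the plan is to explain how the pseudoboundary is assembled and to reduce the general case $0\le k<n$ to the two regimes where concrete constructions are available, citing \cite{GS} for the technical isotopy-theoretic input. The goal object $\Sigma$ is a countable union of strong $Z_{n-k-1}$-sets (pseudoboundary property, first bullet) which ``absorbs'' every strong $Z_{n-k-1}$-set by a homeomorphism that moves points within a prescribed open cover (second bullet). I would first dispose of the low-dimensional cases $n\le 4$ (and also the borderline values $k\in\{n-1,n-2\}$ for all $n$) by hand: for $k=n-1$ one has $\oD_{n-1}=\M$ and $\Z^*_0$ is the family of strong $Z_0$-sets, and a dense $\sigma$-compact boundary-type set (e.g. the Menger-type skeleton, or even $(\IR\setminus\IQ)^{n}$'s complement suitably thickened) works; for the remaining small-dimensional exceptional pairs one invokes the classical Anderson--type absorption results or the explicit pseudoboundaries $\Sigma_k^n$ of Geoghegan--Summerhill built from skeleta of a triangulation.

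For the main range $n\ge 5$, $0\le k\le n-3$, I would use Lemma~\ref{l6.1}, which sandwiches $\Z^*_{n-k-1}$ between $\oD_k\cap\Z_2$ and $\oD_k$. The construction of $\Sigma$ proceeds as follows. Fix a locally finite triangulation of $\IR^n$ together with its iterated barycentric subdivisions $T_0\prec T_1\prec\cdots$, and let $\Sigma=\bigcup_{m\in\w} T_m^{(k)}$ be the union of the $k$-skeleta of these subdivisions. Each $k$-skeleton is a closed, at most $k$-dimensional polyhedron, hence lies in $\oD_k$; one checks (this is where Lemma~\ref{l6.1} and the strong-$Z$-set machinery of \cite{GS} enter) that after a small ambient push each such skeleton is a \emph{strong} $Z_{n-k-1}$-set, so $\Sigma\in\bigcup\Z^*_{n-k-1}$ as required by the first bullet. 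The second bullet — the absorption property — is the substantive point: given $B\in\Z^*_{n-k-1}$, an open $U$, and an open cover $\V$ of $U$, one wants a $\V$-small homeomorphism $h$ of $U$ with $h(U\cap B)\subset\Sigma$. The idea is to use that $B$, being a strong $Z_{n-k-1}$-set, can be isotoped off the dual $(n-k-1)$-skeleton $T_m^{*(n-k-1)}$ of a fine subdivision and thereby radially pushed into a regular neighborhood that deformation-retracts onto the complementary $k$-skeleton $T_m^{(k)}\subset\Sigma$; iterating over $m$ and controlling the mesh by $\V$ via a standard Mayer--type/infinite-swindle argument produces the desired $h$ as an infinite composition that converges in $\HH(U)$.

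\textbf{Main obstacle.} The delicate part is verifying the absorption clause with the stated uniformity — that a \emph{single} homeomorphism $h$ of $U$, arbitrarily close to the identity in the $\V$-sense, simultaneously pushes all of $U\cap B$ into the fixed countable union $\Sigma$. This requires the full strength of the Geoghegan--Summerhill engineering of pseudoboundaries (their Sections~2--3): one must interleave the ``push $B$ off the dual skeleton'' isotopies, which are only $\e$-small on the relevant stage, so that the infinite composition converges and the error telescopes. I would therefore not attempt to reconstruct this; instead I would state precisely which ingredients of \cite{GS} are used — the construction of $\Sigma_k^n$ as a skeletal union, Proposition~3.1 (our Lemma~\ref{l6.1}), and the absorption theorem \cite[3.12]{GS} itself — and present Lemma~\ref{l6.2} as a direct citation, with the self-contained verification reserved only for the elementary exceptional cases noted above. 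The one genuinely new check needed for our purposes is that $\oD_k$ is a topologically invariant family of closed sets (so that the notion of $\oD_k$-pseudoboundary is meaningful and $\Z^*_{n-k-1}\subset\oD_k$ makes the constructed $\Sigma$ usable in Section~\ref{s:tD}), and this is immediate since dimension and closedness are topological invariants.
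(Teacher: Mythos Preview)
Your approach is correct and matches the paper's: the paper does not prove Lemma~\ref{l6.2} at all but simply attributes it to Geoghegan and Summerhill \cite[3.12]{GS} as a quoted ``non-trivial result''. Your plan to treat it as a direct citation is therefore exactly what the authors do; the sketch of the skeletal construction and the discussion of the absorption obstacle, while informative, go beyond what the paper provides and are not needed for the present purposes.
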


With Lemmas~\ref{l6.1} and \ref{l6.2} in our disposition we now are able to prove Theorem~\ref{tD}. So, fix numbers $n\in\IN$ and $k<n$, and consider the $\sigma$-ideal $\sigma\oD_k$ generated by closed $k$-dimensional sets in the Euclidean space $\IR^n$. By Corollary~\ref{any-ideal},
$$\add(\sigma\oD_k)\le\add(\M),\;\;\cov(\sigma\oD_k)=\cov(\M),\;\;\non(\sigma\oD_k)\le\non(\M) \mbox{ and }\cof(\sigma\oD_k)\ge\cof(\M).$$
So, it remains to check that $\add(\sigma\oD_k)\ge\add(\M)$ and $\cof(\sigma\oD_k)\le\cof(\M)$.
Identify the Euclidean space $\IR^n$ with a linear subspace of the Euclidean space $\IR^m$ for some $m\ge n+4\ge 5$. Then $\IR^n$ is a $Z_2$-set in $\IR^m$, and Lemma~\ref{l6.1} guarantees that $\oD_k\cap \Z_2\subset\Z^*_{m-k-1}$, where $\Z_2$ and $\Z^*_{m-k-1}$ stand for the families of $Z_2$-sets and strong $Z^*_{m-k-1}$-sets in $\IR^m$, respectively. By Lemma~\ref{l6.2}, the Euclidean space $\IR^m$ contains a $\Z^*_{m-k-1}$-pseudoboundary $\Sigma$. The set $\Sigma$, being the countable union of closed $k$-dimensional subsets, is $k$-dimensional and hence can be enlarged to a dense $k$-dimensional $G_\delta$-subset $G\subset\IR^m$, see Theorem 1.5.11 in \cite{En2}. Since $\dim(G)=k<m$, the Baire Theorem implies that the set $G$ is not $\sigma$-compact and hence $G$ is the image of the Baire space $\w^\w$ under a perfect map. This fact can be used to prove that $\add(\sigma\K(G))=\add(\sigma\K)=\mathfrak b$ and $\cof(\sigma\K(G))=\cof(\sigma\K)=\mathfrak d$.
\smallskip

(1) To prove that $\add(\sigma\oD_k)\ge\add(\M)$, fix any subfamily $\A\subset\sigma\oD_k$ of cardinality $|\A|<\add(\M)$. Since each set $A\in\A$ is contained in a countable union of compact $k$-dimensional subsets, we lose no generality assuming that each set $A\in\A$ is compact.

The definition of the $\Z^*_{m-k-1}$-pseudoboundary guarantees that for each set $A\in\A\subset \Z^*_{m-k-1}$ the set $\HH_A^\Sigma=\{h\in\HH(\IR^m):h(A)\subset\Sigma\}$ is dense in the homeomorphism group $\HH(\IR^m)$ and hence the $G_\delta$-subset
$$\HH_A^G=\{h\in\HH(\IR^m):h(A)\subset G\}\supset\HH_A^\Sigma$$ also is dense in $\HH(\IR^m)$.
Since $|\A|<\add(\M)\le\cov(\M)$, the intersection $\bigcap_{A\in\A}\HH_A^G$ contains some homeomorphism $h:\IR^m\to\IR^m$. This homeomorphism maps the union $\bigcup\A$ into $G$. Consequently, $h(\A)=\{h(A):A\in\A\}\subset\sigma\K(G)$. Since $|h(\A)|=|\A|<\add(\M)\le\mathfrak b=\add(\sigma\K(G))$, we conclude that the union $h(\cup\A)$ is contained in some $\sigma$-compact subset $K\subset G$. Then the set $A=\IR^n\cap h^{-1}(K)$ is a $\sigma$-compact subset of $\IR^n$ of dimension $\dim(A)\le\dim(h^{-1}(K))=\dim(K)\le\dim (G)=k$ containing the union $\bigcup\A$ and witnessing that $\add(\sigma\oD_k)\ge\add(\M)$.
\smallskip

(2) Next, we prove that $\cof(\sigma\oD_k)\le\cof(\M)=\max\{\non(\M),\mathfrak d\}$. Since $\sigma\K(G)=\mathfrak d$, the ideal $\sigma\K(G)$ has a base $\mathcal D\subseteq\sigma\K(G)$ of cardinality $|\mathcal D|=\mathfrak d$.
In the homeomorphism group $\HH(\IR^m)$ fix any non-meager subset $H$ of
cardinality $|H|=\non(\M(\HH(\IR^m)))=\non(\M)$. It is clear that the family
$\C=\{\IR^n\cap h^{-1}(D):h\in H,\;D\in\mathcal D\}$ has cardinality $|\C|\le|H\times\mathcal D|\le\max\{\non(\M),\mathfrak d\}=\cof(\M)$ and that $\C\subseteq \sigma\oD_k$. We will complete the proof if we show that the family $\C$ is a base for the ideal $\sigma\oD_k$.

Let $A\in\sigma\oD_k$. Without loss of generality we can assume that $A$ is $\sigma$-compact and hence can be written as the countable union $A=\bigcup_{i\in\w}A_i$ of compact subsets $A_i\in\oD_k$, $i\in\w$. Since $\Sigma$ is a $\Z^*_{m-k-1}$-pseudoboundary in $\IR^m$ and  $A_i\in \oD_k\cap\Z_2\subset\Z^*_{m-k-1}$ for $i\in\w$, the set $$\HH^G_A=\{h\in\HH(\IR^m):h(A)\subseteq G\}=\bigcap_{i\in\w}\HH^G_{A_i}$$ is a dense $G_\delta$-set in $\HH(\IR^m)$ and hence it meets the non-meager set $H$. Consequently, there is a homeomorphism $h\in H$ such that $h(A)\subseteq G$. Because $\mathcal D$ is a base for $\sigma\K(G)$, the $\sigma$-compact set $h(A)$ is contained in some $\sigma$-compact set $D\in\mathcal D$. Then $A\subseteq \IR^n\cap h^{-1}(D)\in \C$ and the proof is complete. \hfill $\Box$

\end{document}